
\documentclass[leqno,10pt,a4paper,intlimits]{amsart}
\usepackage{graphicx,color}

\usepackage{amsmath,amssymb,amsthm}
\usepackage{enumerate}
\usepackage{mathrsfs}
\usepackage{hhline}


\def\calC{\mathcal{C}}
\def\calW{\mathcal{W}}
\def\calX{\mathcal{X}}

\def\calA{\mathcal{A}}
\def\calB{\mathcal{B}}

\def\calD{\mathcal{D}}
\def\calT{\mathcal{T}}
\def\calS{\mathcal{S}}

\def\calR{\mathcal{R}}
\def\calG{\mathcal{G}}
\def\calP{\mathcal{P}}
\def\calJ{\mathcal{J}}


\def\C{\mathrm{C}}
\def\Ell{\mathrm{L}}
\def\LLL{\mathscr{L}}
\def\BUC{\mathrm{BUC}\hspace*{0.5mm}}

\def\BB{\mathrm{B}\hspace*{0.5mm}}
\def\Sob{\mathrm{H}\hspace*{0.5mm}}

\def\reqtext#1{#1}
\def\bdef{\textbf}

\def\dd{\mathrm{d}}
\def\dt{\tfrac{\dd}{\dd t}}
\def\Id{I}
\def\Dt{\tau}
\def\Dx{\delta}

\def\ee{\mathrm{e}}

\def\NN{\mathbb{N}}
\def\RR{{\mathbb{R}}}
\def\CC{{\mathbb{C}}}

\def\dom{D}



\newtheorem{theorem}{Theorem}[section]
\newtheorem{corollary}[theorem]{Corollary}
\newtheorem{proposition}[theorem]{Proposition}
\theoremstyle{definition}
\newtheorem{definition}[theorem]{Definition}
\newtheorem{assumptions}[theorem]{Assumption}
\newtheorem{remark}[theorem]{Remark}

\begin{document}

\title[Non-autonomous splitting]{Operator splitting for non-autonomous evolution equations}

\author[A. B\'{a}tkai]{Andr\'{a}s B\'{a}tkai}
\address{ELTE TTK, Institute of Mathematics\newline 1117 Budapest, P\'{a}zm\'{a}ny P. s\'{e}t\'{a}ny 1/C, Hungary.}
\email{batka@cs.elte.hu}
\thanks{Research partially supported by the Alexander von Humboldt-Stiftung.}

\author[P. Csom\'{o}s]{Petra Csom\'{o}s}
\address{Technische Universit\"{a}t Darmstadt, Fachbereich Mathematik\newline Schlo{\ss}gartenstr. 7, 64289 Darmstadt, Germany}
\email{csomos@mathematik.tu-darmstadt.de}
\email{farkas@mathematik.tu-darmstadt.de}
\author[B. Farkas]{B\'{a}lint Farkas}

\author[G. Nickel]{Gregor Nickel}
\address{Universit\"at Siegen, FB 6 Mathematik\newline Walter-Flex-Str. 3, 57068 Siegen, Germany.}
\email{nickel@mathematik.uni-siegen.de}

\subjclass{47D06, 47N40, 65J10, 34K06} \keywords{Non-autonomous
evolution equations, operator splitting, evolution families,
Lie-Trotter product formula, spatial approximation}

\date\today

\dedicatory{To Ulf Schlotterbeck, our inspirator, on his 70th birthday.}

\begin{abstract}
 We establish general product formulas for the solutions of non-autonomous abstract Cauchy problems.
 The main technical tools are evolution semigroups allowing the direct application of existing results on autonomous problems.
 The results obtained are illustrated by the example of an autonomous diffusion equation perturbed with time dependent potential.
 We also prove convergence rates for the sequential splitting applied to this problem.
\end{abstract}
\maketitle


\section{Introduction}

Operator splitting procedures are used to solve ordinary and partial
differential equations numerically. They can be considered as
certain finite difference methods which simplify or even make the
numerical treatment of differential equations possible. The idea
behind these procedures is the following. In many situations, a
certain physical phenomenon can be considered as the combined effect
of several processes. Hence the behavior of a physical quantity is
described by a partial differential equation in which the time
derivative depends on the sum of operators corresponding to the
different processes. These operators usually are of different nature
and for each sub-problem corresponding to each operator there might
be an effective numerical method providing fast and accurate
solutions. For the sum of these operators, however, it is not always
possible to find an adequate and effective method. Hence, the idea
of operator splitting procedures means that instead of the sum we
treat the operators separately and the solution of the original
problem is then to be recovered from the numerical solutions of
these sub-problems. We refer to the recent monographs by Farag\'o and
Havasi \cite{Farago-Havasi_book} or Holden et al. \cite{Holden-Karlsen-Lie-Risebro} for a detailed introduction to the
theory and applications of operator splitting methods.

There was enormous progress in recent years in the theoretical
investigation of operator splitting procedures. Especially, ordinary
differential equations and autonomous linear evolution equations
have been treated thoroughly, see also B\'{a}tkai, Csom\'{o}s and Nickel
\cite{Batkai-Csomos-Nickel} and the subsection below for a
(certainly not complete) list of references.

The aim of the present paper is to investigate the above described
splitting method for non-autonomous evolution equations of the form
\begin{equation}\label{eq:perturbed}\tag{NCP}
\begin{cases}
\dt{u}(t)=(A(t)+B(t))u(t), \quad t\geq s\in\RR, \\
u(s)=x\in X,
\end{cases}
\end{equation}
on some Banach space $X$.
Our particular goal is to emphasize that \emph{non-autonomous}
evolution equations can often be rewritten as an \emph{autonomous}
abstract Cauchy problem by means of an appropriate choice for the
state-space. Thus, by making use of so-called evolution semigroups,
it is possible to apply existing results for autonomous problems.

First we summarize the necessary background on operator splitting
for abstract Cauchy problems, i.e., operator splitting in the
framework of strongly continuous operator semigroups. The key
ingredient here is Chernoff's Theorem \ref{thm:chernoff}. Then
non-autonomous evolution equations and evolution semigroups are
surveyed, providing the main technical tools for the succeeding
sections.

A product representation is presented in Section \ref{sec:4}, while
operator splitting --- strictly in the sense above --- is considered
in Section \ref{sec:5}. To keep our presentation short, we mainly
restrict ourselves to the case of the so-called \emph{sequential
splitting}, but in Section \ref{sec:6} we show how higher order
splitting methods can be treated with essentially no difference. In
that section, we also prove the convergence of the splitting methods
when combined with spatial ``discretization,'' and make a quick outlook on the positivity  of evolution families. Finally, as an
illustration of the developed tools, we apply them to a diffusion
equation with time dependent potential. Moreover, by semigroup
methods, using results of Jahnke and Lubich \cite{Jahnke-Lubich}, and
Hansen and Ostermann \cite{Hansen-Ostermann,Hansen-Ostermann2}, we obtain estimates on
the order of the convergence.

A word on notation: For a family of operators
$U_0,U_1,\ldots,U_{n-1}\in\LLL(X)$, we denote the (``time-ordered'')
product of these operators by
\begin{equation*}
\prod_{p=0}^{n-1}U_p:=U_{n-1} U_{n-2} \cdots U_1
U_0\quad\mbox{and}\quad\prod_{p=n-1}^{0}U_p:=U_0U_1\cdots U_{n-2}
U_{n-1} .
\end{equation*}

\subsection*{Operator splitting for autonomous
problems}

In this section, we recollect the main notions and results of
operator splitting for \emph{autonomous} equations. Consider the
following abstract Cauchy problem on a given Banach space $X$:
\begin{equation*}
\begin{cases}
\dt u(t)&=(A+B)u(t), \qquad t\ge 0, \\
u(0)&=x\in X, \label{acpspl}
\end{cases}
\tag{$\mathrm{ACP}$}
\end{equation*}
where the operators $A$, $B$, and the closure $C:=\overline{A+B}$
are supposed to be generators of strongly continuous semigroups $T$,
$S$, and $U$, respectively. Our general reference on strongly
continuous operator semigroups is the monograph Engel and Nagel
\cite{Engel-Nagel}.

As mentioned in the introduction, operator splitting means that we
try to recover the solution semigroup $U$ using the semigroups $T$
and $S$. As for splitting procedures we mention the most frequently
used ones (for more details, see B\'{a}tkai, Csom\'{o}s and Nickel
\cite[Section 2.2]{Batkai-Csomos-Nickel}):

\medskip
\noindent\textbullet\hangafter1\hangindent1em\hskip0.5emThe
\bdef{sequential splitting}, classically the Lie-Trotter product
formula, is given by
\begin{align*}
u^\text{sq}_n(t)&:=[S(t/n)T(t/n)]^nx,
\intertext{\hangafter1\hangindent1em \textbullet\hskip0.5emthe
\bdef{Strang} splitting is given by}
u^\text{St}_n(t)&:=[T(t/2n)S(t/n)T(t/2n)]^nx,
\intertext{\noindent\hangafter1\hangindent1em \textbullet\hskip0.5em
and --- for a fixed parameter $\Theta\in(0,1)$ --- the
\bdef{weighted splitting} is} u^\text{w}_n(t)&:=[\Theta
S(t/n)T(t/n)+(1-\Theta)T(t/n)S(t/n)]^n x
\end{align*}
\par\noindent with $n\in\mathbb{N}$. In case $\Theta=\tfrac{1}{2}$, it is also called symmetrically weighted splitting. The convergence of these procedures is usually ensured by the
following classical result.
\begin{theorem}[Chernoff \cite{Chernoff}, or see Engel and Nagel {\cite[Sec.~III.]{Engel-Nagel}}]\label{thm:chernoff}
Let $C$ be a linear operator in the Banach space $X$ and assume that
$F:\RR_+\to \LLL(X)$ is a (strongly) continuous function with
$F(0)=I$ and
\begin{equation*}
\|(F(t))^k\|\leq M\ee^{k\omega t} \quad\mbox{for all $t\geq 0$
 and  $k\in \NN$} \quad \reqtext{\bdef{(stability)}}.
\end{equation*}
Suppose that there is a dense subspace $D$, with $(\lambda-C)D$
being also dense for some (large) $\lambda>0$. If for every $x\in D$
the limit
\begin{equation*}
\lim_{h\to 0}\frac{F(h)x-x}{h}=Cx\quad
\reqtext{\bdef{(consistency)}}
\end{equation*}
exists, then $C$ is the generator of a $C_0$-semigroup $U$, the set $D$ is a core for the generator $C$, and we
have
\begin{equation*}
\lim_{n\to
\infty}\bigl(F(\tfrac{t}{n})\bigr)^nx=U(t)x\quad\reqtext{\bdef{(convergence)}}.
\end{equation*}
\end{theorem}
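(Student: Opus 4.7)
The plan is the classical Chernoff approximation argument: replace the mere family $F$ by the \emph{honest} semigroups $(\ee^{tA_h})_{t\ge 0}$ generated by the bounded operators $A_h:=\tfrac{1}{h}(F(h)-I)\in\LLL(X)$, then pass to the limit $h\downarrow 0$ via the Trotter-Kato approximation theorem. The stability hypothesis plays the role of providing the uniform bounds needed at every stage.

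First I would record an exponential bound on $(\ee^{tA_h})_{t\ge 0}$. Since $A_h$ is bounded, expanding
\begin{equation*}
\ee^{tA_h}=\ee^{-t/h}\sum_{k=0}^{\infty}\frac{(t/h)^k}{k!}F(h)^k
\end{equation*}
and using the stability estimate termwise yields $\|\ee^{tA_h}\|\le M\exp\bigl((t/h)(\ee^{\omega h}-1)\bigr)$, which is dominated by some $M'\ee^{\omega' t}$ uniformly for $h$ in any bounded interval. The heart of the argument is then the Chernoff $\sqrt{n}$-estimate
\begin{equation*}
\|F(h)^n x-\ee^{n(F(h)-I)}x\|\le M\ee^{n\omega h}\sqrt{n}\,\|F(h)x-x\|,
\end{equation*}
which I would derive by writing the left-hand side as $\sum_{k\ge 0}\ee^{-n}\tfrac{n^k}{k!}(F(h)^n-F(h)^k)x$, telescoping $F(h)^n-F(h)^k$ into $|n-k|$ factors of $F(h)-I$, and estimating the resulting Poisson-type sum by Cauchy-Schwarz together with the identity $\sum_{k\ge 0}\ee^{-n}\tfrac{n^k}{k!}(k-n)^2=n$ (the variance of a Poisson law with mean $n$). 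Specialising $h=t/n$ and invoking consistency, $\|F(t/n)x-x\|=\tfrac{t}{n}\|Cx\|+o(1/n)$ for $x\in D$, so the bound becomes $O(1/\sqrt{n})$ and vanishes.

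It remains to identify $\lim_{n\to\infty}\ee^{tA_{t/n}}x$. Consistency gives $A_hx\to Cx$ for $x\in D$; combined with the uniform exponential bound above and the density of $(\lambda-C)D$, this is precisely the input required by the Trotter-Kato approximation theorem, which simultaneously yields that the closure of $C$ generates a $C_0$-semigroup $U$, that $D$ is a core, and that $\ee^{tA_{t/n}}x\to U(t)x$ uniformly on compact time intervals. Coupling this with the Chernoff estimate on $D$ gives $F(t/n)^nx\to U(t)x$ first on $D$ and then, by the stability bound $\|F(t/n)^n\|\le M\ee^{\omega t}$ and a standard $3\varepsilon$-argument, on all of $X$.

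The principal technical obstacle is the Chernoff $\sqrt{n}$-estimate itself: the Poisson bookkeeping is what converts a first-order consistency error of size $O(1/n)$ into a convergent iterate after $n$ steps, where the naive triangle inequality would deliver only $n\cdot O(1/n)=O(1)$ and no convergence.
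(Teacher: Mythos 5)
The paper does not prove this theorem; it is quoted as a known background result with references to Chernoff's memoir and to Engel--Nagel, Section III. Your argument is precisely the standard proof given in that cited source: the uniform bound on $\ee^{tA_h}$ with $A_h=\tfrac1h(F(h)-I)$, the Poisson/$\sqrt{n}$ commutation estimate $\|F(h)^n x-\ee^{n(F(h)-I)}x\|\le M\ee^{n\omega h}\sqrt{n}\,\|F(h)x-x\|$, and the Trotter--Kato approximation theorem (whose hypotheses are exactly your consistency, stability, and range-density conditions) to identify the limit semigroup and establish the core property -- so it is correct and takes essentially the same route as the reference the authors rely on.
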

Note that if the closure of $C$ is already known to be a generator, as it is the case in problems motivated by numerical analysis, then the range condition is automatically satisfied.

The operator family $F$ is sometimes called a \bdef{finite
difference method}. Clearly, the above mentioned splitting
procedures have this form. For example, for the sequential splitting
we take
\begin{equation*}
F^\text{sq}(h)=S(h)T(h).
\end{equation*}%

\medskip

It is important to note that  Chernoff's Theorem does not yield anything a priori about
the rate of convergence. The finite difference method $F$ is said to
be of \bdef{order $p>0$}, if for $x$ from a suitably large subset of
$X$ there is $C>0$ such that for all $t\in [0,t_0]$ we have
\begin{align*}
\bigl\| F(\tfrac{t}{n})^n x - U(t)x \bigr\| &\leq \tfrac{C}{n^p},
\intertext{or, as in many special cases, equivalently,} \|F(h)x -
U(h)x\| &\leq C'h^{p+1}.
\end{align*}

The equivalence holds in special cases where it is possible to
ensure the invariance of the above mentioned large subset $D$ of $X$
(for more details we refer to the Lax equivalence theorem which
states that the above two definitions are equivalent for a finite
different method if and only if the method is stable).

Different splitting procedures were introduced to increase the order
of convergence. In the finite dimensional setting, it is well known
that the sequential splitting is of first order, the Strang and the
weighted splitting with $\Theta=\tfrac{1}{2}$ are of second order.
Moreover, the weighted splitting allows also the use of parallel
computing.

In the infinite dimensional case, however, no similar general
statement can be made without additional assumptions. There has been
intense research in this direction, and we mention the works by
Bj{\o}rhus \cite{Bjorhus}, Cachia and Zagrebnov \cite{Cachia-Zagrebnov}, Farag\'o and Havasi \cite{Farago-Havasi}, Hansen and Ostermann
\cite{Hansen-Ostermann}, Ichinose et al. \cite{Ichinose-Neidhardt-Zagrebnov}, Jahnke and Lubich \cite{Jahnke-Lubich} or Neidhardt and Zagrebnov \cite{Neidhardt-Zagrebnov1}.

To obtain error estimates later for diffusion problems, we apply a
result by Jahnke-Lubich, Hansen-Ostermann, which  relies on
commutator bounds. For simplicity, we mention here only the special
case  used later.

\begin{theorem}[Jahnke and Lubich {\cite[Theorem 2.1]{Jahnke-Lubich}}, Hansen and Ostermann {\cite[Theorem 2.3]{Hansen-Ostermann}}]\label{Thm:Jahnke-Lubich}
Suppose that $A$ generates a strongly continuous contraction
semigroup $\ee^{t A}$ in the Banach space $X$ and that
$B\in\LLL(X)$ such that there exists an $\alpha>0$ such that
\begin{equation}\label{eq:commutator_cond}
\|[A,B]v\|=\|(AB-BA)v\| \leq c\bigl\|(-A)^{\alpha}v\bigr\|
\end{equation}
for all $v\in D\subseteq \dom((-A)^{\alpha})$ (where $D$ is some
dense subspace of $\dom((-A)^{\alpha})$ invariant under $\ee^{t(A+B)}$). Then one has first order
convergence for the sequential and Strang splittings, i.e.,
\begin{align*}
\Bigl\| \left(\ee^{\frac{t}{n}B}\ee^{\frac{t}{n}A}\right)^n v - \ee^{t(A+B)}v \Bigr\| &\leq \frac{Ct^2}{n} \Bigl\|(-A)^{\alpha}v\Bigr\|,\\
\Bigl\|
\left(\ee^{\frac{t}{2n}A}\ee^{\frac{t}{n}B}\ee^{\frac{t}{2n}A}\right)^n
v - \ee^{t(A+B)}v \Bigr\| &\leq \frac{Ct^2}{n}
\Bigl\|(-A)^{\alpha}v\Bigr\|.
\end{align*}
\end{theorem}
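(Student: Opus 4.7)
The plan is to follow the classical local-error-plus-telescoping paradigm. First I would represent the local error $\ee^{hB}\ee^{hA}-\ee^{h(A+B)}$ as an iterated integral of the commutator $[A,B]$. Introducing the auxiliary curve $\phi(s):=\ee^{(h-s)(A+B)}\ee^{sB}\ee^{sA}$ for $s\in[0,h]$, a direct differentiation (legitimate since $B\in\LLL(X)$, so $\ee^{sB}$ is norm-entire in $s$) gives
\begin{equation*}
\phi'(s)=-\ee^{(h-s)(A+B)}[A,\ee^{sB}]\ee^{sA},
\end{equation*}
and the same idea applied once more yields the identity $[A,\ee^{sB}]=\int_0^s \ee^{(s-r)B}[A,B]\ee^{rB}\,\dd r$. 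Integrating $\phi'$ over $[0,h]$ and applying to $v\in D$, the local error is represented as
\begin{equation*}
\bigl(\ee^{hB}\ee^{hA}-\ee^{h(A+B)}\bigr)v=-\int_0^h\!\!\int_0^s \ee^{(h-s)(A+B)}\ee^{(s-r)B}[A,B]\ee^{rB}\ee^{sA}v\,\dd r\,\dd s.
\end{equation*}

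Next I would apply the commutator hypothesis \eqref{eq:commutator_cond} to estimate the integrand by $c\,\bigl\|(-A)^{\alpha}\ee^{rB}\ee^{sA}v\bigr\|$, multiplied by the operator norms of the three outer semigroups, which are uniformly bounded on any compact time window by the contractivity of $\ee^{tA}$, the bounded perturbation theorem applied to $A+B$, and $\|\ee^{sB}\|\le \ee^{s\|B\|}$. The key technical step is to control $\|(-A)^\alpha\ee^{rB}\ee^{sA}v\|$ by $\|(-A)^\alpha v\|$: the factor $\ee^{sA}$ commutes with $(-A)^\alpha$, and since $B$ is bounded, standard fractional-power/moment-inequality arguments give $\dom((-A)^\alpha)=\dom((-(A+B))^\alpha)$ with equivalent graph norms, so $(-A)^\alpha$ can be moved past $\ee^{rB}$ at the cost of a constant depending on $\|B\|$ and $r$. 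Together this yields a local error estimate of the form $\bigl\|(\ee^{hB}\ee^{hA}-\ee^{h(A+B)})v\bigr\|\le C\,h^{2}\bigl\|(-A)^\alpha v\bigr\|$ uniformly in $v\in D$ on bounded time intervals.

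Third, I would pass from local to global error by the standard telescoping identity
\begin{equation*}
\bigl(\ee^{hB}\ee^{hA}\bigr)^nv-\ee^{nh(A+B)}v=\sum_{k=0}^{n-1}\bigl(\ee^{hB}\ee^{hA}\bigr)^{n-1-k}\bigl(\ee^{hB}\ee^{hA}-\ee^{h(A+B)}\bigr)\ee^{kh(A+B)}v,
\end{equation*}
applied with $h=t/n$. Invoking the invariance of $D$ under $\ee^{t(A+B)}$ together with the graph-norm equivalence above, one bounds $\|(-A)^\alpha\ee^{kh(A+B)}v\|\le C'\|(-A)^\alpha v\|$ uniformly for $kh\le t$, while stability of the sequential product provides a uniform bound for $\|(\ee^{hB}\ee^{hA})^{n-1-k}\|$. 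Summing $n$ copies of the local $O(h^2)$ estimate with $h=t/n$ produces the announced $Ct^2/n\cdot\|(-A)^\alpha v\|$ bound. The Strang case is handled analogously with the symmetric auxiliary curve $\phi(s):=\ee^{(h-s)(A+B)}\ee^{sA/2}\ee^{sB}\ee^{sA/2}$; with only the first-order commutator bound \eqref{eq:commutator_cond} the symmetry does not improve the order, and exactly the same scheme delivers the second inequality.

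The main obstacle will be precisely the propagation of the fractional-power bound across the perturbed semigroup, i.e.\ proving $\|(-A)^\alpha\ee^{t(A+B)}v\|\le C\|(-A)^\alpha v\|$ on $[0,t_0]$ for $v\in D$. This is where the invariance of $D$ under $\ee^{t(A+B)}$ and the boundedness of $B$ must be combined; without it, the commutator bound — expressed in terms of $(-A)^\alpha$ rather than $(-(A+B))^\alpha$ — cannot be propagated uniformly through the $n$ summands of the telescoping sum, and the conclusion would degrade.
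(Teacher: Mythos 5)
First, a point of reference: the paper does not prove this theorem at all --- it is imported from Jahnke--Lubich and Hansen--Ostermann and used as a black box in the application to the parabolic equation. Your overall scheme (integral representation of the local error via an interpolating curve, a commutator identity turning it into a double integral of $[A,B]$, then telescoping with stability and propagation of the $(-A)^\alpha$-bound through $\ee^{t(A+B)}$) is exactly the strategy of those references; the telescoping identity, the stability bound $\|(\ee^{hB}\ee^{hA})^j\|\leq\ee^{t\|B\|}$, and the final summation $n\cdot O(h^2)=O(t^2/n)$ are all correct.

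There is, however, a genuine gap in the local-error estimate, caused by the ordering of factors in your curve $\phi(s)=\ee^{(h-s)(A+B)}\ee^{sB}\ee^{sA}$. Differentiating $\phi$ already requires applying $A+B$ to $\ee^{sB}\ee^{sA}v$, and your double-integral representation leaves $[A,B]$ acting on $\ee^{rB}\ee^{sA}v$; you then assert that $(-A)^\alpha$ ``can be moved past $\ee^{rB}$ at the cost of a constant.'' This is false for a general $B\in\LLL(X)$: a bounded operator need not map $\dom((-A)^\alpha)$ into itself (take $A=\Delta$ on $\Ell^2(\RR^d)$, $\alpha=1/2$, and $B=\langle\cdot,g\rangle f$ of rank one with $f\notin \Sob^1(\RR^d)$; then $\ee^{rB}$ maps $\Sob^1$ outside $\Sob^1$), and the graph-norm equivalence you invoke concerns the perturbed generator $A+B$, not $B$ alone. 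The repair is to reverse the curve: with $\tilde\phi(s)=\ee^{sB}\ee^{sA}\ee^{(h-s)(A+B)}$ one gets $\tilde\phi'(s)=\ee^{sB}[B,\ee^{sA}]\ee^{(h-s)(A+B)}$, so the commutator lands on vectors of the form $\ee^{rA}\ee^{(h-s)(A+B)}v$, where only the commutation of $(-A)^\alpha$ with its own contraction semigroup and the propagation bound $\|(-A)^\alpha\ee^{\tau(A+B)}v\|\leq C\|(-A)^\alpha v\|$ are needed --- precisely the obstacle you correctly isolate at the end. Note also that your justification of that propagation bound ($\dom((-A)^\alpha)=\dom((-(A+B))^\alpha)$ with equivalent norms for bounded $B$) is standard only for $0<\alpha\leq 1$, whereas the statement allows any $\alpha>0$; and making sense of $[A,B]$ on the relevant vectors (which a priori requires $B$ to map them into $\dom(A)$) forces one either to add hypotheses or to extend $[A,B]$ by density from $D$ to $\dom((-A)^\alpha)$ using \eqref{eq:commutator_cond}, as is done in the cited sources.
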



\subsection*{Non-autonomous evolution equations and evolution
semigroups}

In this section we summarize the main results and definitions on
\textit{non-autonomous} evolution equations and evolution semigroups needed
for our later exposition. For a detailed account and bibliographic
references see, e.g., the survey by Schnaubelt in \cite[Section
VI.9.]{Engel-Nagel}. Consider now the non-autonomous evolution
equation
\begin{equation*}
\begin{cases}\tag{$\mathrm{NCP}_{s,x}$}
\dt u(t)=A(t)u(t), \quad t\geq s\in\RR, \\
u(s)=x\in X,
\end{cases}
\end{equation*}
where $X$ is a Banach space, $\big(A(t), \dom(A(t))\big)$ is a
family of (usually unbounded) linear operators on $X$.

\begin{definition} A continuous function $u : [s,\infty) \rightarrow X$
is called a \bdef{(classical) solution} of ($\mathrm{NCP}_{s,x}$) if
$u \in \C^1([s, \infty);X)$, $u(t) \in \dom(A(t))$ for all $t \ge
s$, $u(s) = x$, and $\dt{u}(t) = A(t) u(t)$ for $t \ge s$.
\end{definition}

We use the following slight modification of Kellermann's definition
\cite[Definition 1.1]{Kellermann} for the well-posedness of the
non-autonomous Cauchy problem $\mathrm{(NCP)}$.
\begin{definition}[{\bdef{Well-posedness}}\rm] \label{Well-NCP}
For a family $\big(A(t),\dom(A(t))\big)_{t \in \RR}$ of linear
operators on the Banach space $X$ the non-autonomous Cauchy problem
$\mathrm{(NCP)}$ is called \bdef{well-posed} (with regularity
subspaces $(Y_s)_{s \in \RR}$ and exponentially bounded solutions)
if the following are true.
\begin{enumerate}[(i)]
\item \bdef{(Existence)} For all $s \in \RR$ the subspace
\begin{equation*} Y_s := \Bigl\{ y \in X \; : \; \mbox{ there exists a classical solution for } \text{(NCP)}_{s,y}\Bigr\} \subset \dom(A(s))
\end{equation*}
is dense in $X$.
\item \bdef{(Uniqueness)} For every $y \in Y_s$ the solution $u_s(\cdot,y)$ is unique.
\item \bdef{(Continuous dependence)} The solution depends continuously on $s$ and $y$, i.e.,  if $s_n \to s \in \RR$, $y_n \to y \in Y_s$ with $y_n \in Y_{s_n}$, then we have
\begin{equation*}
\| \hat{u}_{s_n}(t,y_n) - \hat{u}_s(t,y) \| \to 0
\end{equation*}
uniformly for $t$ in compact subsets of $\RR$, where
\begin{equation*}
\hat{u}_r(t,y) :=
\begin{cases}
 u_r(t,y) &\mbox{if } r \leq t, \\
y &\mbox{if } r > t.
\end{cases}
\end{equation*}
\item \bdef{(Exponential boundedness)} There exist constants $M \ge 1$ and $\omega \in \RR$
such that
\begin{equation*}
\| u_s(t, y)\| \leq M \ee^{\omega (t-s)} \| y \|
\end{equation*}
for all $y \in Y_s$ and $t\ge s$.
\end{enumerate}
\end{definition}

\noindent As in the autonomous case, the operator family solving a
non-autonomous Cauchy problem enjoys certain algebraic properties.

\begin{definition}[\bdef{Evolution family}] \label{evf}
A family $U=(U(t,s))_{t \ge s}$ of linear, bounded operators on a
Banach space $X$ is called an (exponentially bounded)
\bdef{evolution family} if
\begin{enumerate}
\item[(i)] $U(t,r)U(r,s) = U(t,s), \quad U(t,t) = \Id$ \quad holds for all
$t \ge r \ge s \in \RR$,
\item[(ii)] the mapping $(t,s) \mapsto U(t,s)$ is strongly continuous,
\item[(iii)] $\| U(t,s) \| \leq M \ee^{\omega (t-s)}$
for some $M \ge 1,
 \omega \in \RR$ and all $t \ge s \in \RR$.
\end{enumerate}
\end{definition}

In general, however, and in contrast to the behavior of
$C_0$-semigroups (i.e., the autonomous case), the algebraic
properties of an evolution family do not imply any differentiability
on a dense subspace. So we have to add some differentiability
assumptions in order to solve a non-autonomous Cauchy problem by an
evolution family.
\begin{definition}\label{Well-evf}
An evolution family $U=(U(t,s))_{t \ge s}$ is called \bdef{evolution
family solving $\mathrm{(NCP)}$} if for every $s \in \RR$ the
regularity subspace
\begin{equation*}
Y_s := \Bigl\{ y \in X \; : \; [s, \infty) \ni t \mapsto U(t,s)y
\mbox{ solves }  \text{(NCP)}_{s,y} \Bigr\}
\end{equation*}
is dense in $X$.
\end{definition}

The well-posedness of $\mathrm{(NCP)}$ can now be characterized by
the existence of a solving evolution family.

\begin{proposition}[{Nickel \cite[Proposition 2.5]{nickel97}}]
Let $X$ be a Banach space, and assume that $\big(A(t),
\dom(A(t))\big)_{t \in \RR}$ is a family of linear operators on $X$
and consider the non-autonomous Cauchy problem $\mathrm{(NCP)}$. The
following assertions are equivalent.
\begin{enumerate}
\item[(i)]  The non-autonomous Cauchy problem $\mathrm{(NCP)}$ is well-posed.
\item[(ii)] There exists a unique evolution family
  $(U(t,s))_{t \ge s}$ solving $\mathrm{(NCP)}$.
\end{enumerate}
\end{proposition}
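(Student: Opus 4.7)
The plan is to prove the equivalence by constructing, in each direction, the object asserted in the other direction from the given data, and then verifying all the axioms.

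For the direction (i)$\Rightarrow$(ii), I would start by defining, for each $s\in\RR$ and each $y\in Y_s$, the operator $U(t,s)y:=u_s(t,y)$, where $u_s(\cdot,y)$ is the unique classical solution guaranteed by well-posedness. The exponential boundedness axiom of well-posedness together with the density of $Y_s$ in $X$ lets me extend $U(t,s)$ by continuity to a bounded linear operator on all of $X$, satisfying $\|U(t,s)\|\le M\ee^{\omega(t-s)}$, which is axiom (iii) of Definition \ref{evf}. For the cocycle identity $U(t,r)U(r,s)=U(t,s)$, I would take $y\in Y_s$, observe that both sides applied to $y$ are classical solutions of $(\mathrm{NCP})_{s,y}$ restarted at time $r$ (using that $u_s(r,y)\in Y_r$, which one can arrange from the chosen regularity subspaces), and invoke the uniqueness statement in well-posedness; density then yields the identity everywhere. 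Strong continuity of $(t,s)\mapsto U(t,s)$ follows directly from the continuous-dependence clause (iii) of Definition \ref{Well-NCP} together with the uniform bound. This produces an evolution family solving $(\mathrm{NCP})$ in the sense of Definition \ref{Well-evf}. Uniqueness of $U$ is then immediate: any other solving evolution family $\widetilde U$ would agree with $U$ on each $Y_s$ (by uniqueness of classical solutions) and hence on $X$ by density and strong continuity.

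For the reverse direction (ii)$\Rightarrow$(i), I would set $u_s(t,y):=U(t,s)y$ for $y$ in the regularity subspace $Y_s$ attached to $U$. Existence of classical solutions on a dense set is then built into Definition \ref{Well-evf}; exponential boundedness is axiom (iii) of the evolution family; and continuous dependence in $s$ and $y$ follows from joint strong continuity of $(t,s)\mapsto U(t,s)$ combined with the uniform bound, after verifying that the modification $\hat u_r(t,y)$ used in Definition \ref{Well-NCP}(iii) behaves correctly at the switching time (a straightforward $\varepsilon$-argument splitting into the cases $t>s$ and $t\le s$).

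The main obstacle is the \emph{uniqueness} clause (ii) of Definition \ref{Well-NCP}, since an evolution family a priori carries no differentiability in the backward variable, so the textbook trick of differentiating $r\mapsto U(t,r)v(r)$ for a putative second solution $v$ is not directly available. My strategy to handle this is to exploit the uniqueness of the evolution family itself: given any classical solution $v$ of $(\mathrm{NCP})_{s,y}$ with $y\in Y_s$, one can show that the prescription $V(t,r)z:=v(t)$ (for $z=v(r)$, extended by linearity and the cocycle law built from $v$ together with the given $U$ off the orbit of $y$) yields another evolution family solving $(\mathrm{NCP})$; uniqueness of $U$ then forces $v(t)=V(t,s)y=U(t,s)y$. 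Alternatively, and perhaps more cleanly, one can pass to the associated evolution semigroup on a space of $X$-valued functions, where the non-autonomous problem becomes autonomous and the classical semigroup uniqueness theorem applies; this is the route already prepared by the paper's subsequent use of evolution semigroups.
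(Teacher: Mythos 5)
The paper offers no proof of this proposition; it is quoted verbatim from Nickel \cite[Proposition 2.5]{nickel97}, so there is no in-paper argument to compare yours against. Your direction (i)$\Rightarrow$(ii) is essentially the standard construction and is sound: the restriction of a classical solution to $[r,\infty)$ is again a classical solution, so $u_s(r,y)\in Y_r$ automatically, the cocycle law then follows from uniqueness of classical solutions, and the extension by density, the exponential bound, the joint strong continuity via the continuous-dependence clause, and the uniqueness of the solving family are all as you describe. The easy parts of (ii)$\Rightarrow$(i) (existence, exponential boundedness, continuous dependence) are also fine.

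The genuine gap is exactly where you locate it: uniqueness of classical solutions in (ii)$\Rightarrow$(i). Neither of your two proposed repairs is a proof. The patching construction $V(t,r)z:=v(t)$ for $z=v(r)$ prescribes $V(t,r)$ only on a one-dimensional subspace that varies with $r$ and may collapse (if you reduce by linearity to a rogue solution $w$ of $(\mathrm{NCP})_{s,0}$, then $w(s)=0$ and the prescription is empty at $r=s$); there is no canonical way to extend it to bounded linear operators on $X$ so that the cocycle identity, the strong continuity in $(t,s)$, the exponential bound, \emph{and} the density of the regularity subspaces all survive --- and without that last property $V$ is not a \emph{solving} evolution family, so the uniqueness asserted in (ii) yields no contradiction. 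The evolution-semigroup route is the one actually used in \cite{nickel97}, but it is not free: one must first characterize the generator $\calG$ of $\calT$ (e.g.\ via the integral identity $f(t)=U(t,s)f(s)+\int_s^t U(t,\sigma)(\calG f)(\sigma)\,\dd\sigma$, or as acting like $-\frac{\dd}{\dd s}+A(\cdot)$ on a core as in Theorem \ref{WellChar}), and then embed the single trajectory $v$ into an $\calX$-valued function belonging to $\dom(\calG)$ (say by a scalar cutoff), after which uniqueness for the autonomous problem on $\calX$ applies. The naive pointwise argument is indeed unavailable, as you note, because nothing guarantees that $v(r)$ lies in the regularity subspace of $U$ at time $r$, so $r\mapsto U(t,r)v(r)$ cannot be differentiated. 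Until one of these steps is actually carried out, the implication (ii)$\Rightarrow$(i) is not established.
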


To every evolution family we can associate  $C_0$-semigroups on
$X$-valued function spaces. These semigroups, which determine the behavior
of the evolution family completely, are called \emph{evolution
semigroups}. Consider the Banach space
\begin{equation*}
\BUC(\RR; X) =\bigl\{ f : \RR \to X \; : \;f \mbox{ is bounded and uniformly
continuous}\bigr\},
\end{equation*}
normed by
\begin{equation*}
\| f \| := \sup_{t \in \RR} \| f(t) \|, \quad f \in\BUC(\RR;X);
\end{equation*}
or any closed subspace of it that is invariant under the right
translation semigroup $\calR$ defined by
\begin{equation*}
(\calR(t)f)(s) := f(s-t) \quad \mbox{ for } f \in\BUC(\RR;X) \mbox{
and } s \in \RR, \; t \ge 0.
\end{equation*}
In the following $\calX$ will denote such a closed subspace; we
shall typically take $\calX=\C_0(\RR;X)$, the space of continuous
functions vanishing at infinity.

It is easy to check that the following definition yields a strongly
continuous semigroup.
\begin{definition} \label{EHG}
For an evolution family $U=(U(t,s))_{t \ge s}$ we define the
corresponding \bdef{evolution semigroup} $\calT$ on the space
$\calX$ by
\begin{equation*}
 (\calT(t)f)(s) := U(s,s-t)f(s-t)
\end{equation*}
for $f \in\calX$, $s \in \RR$ and $t \ge 0$. We denote its
infinitesimal generator by $(\calG,\dom(\calG))$.
\end{definition}
With the above notation, the evolution semigroup operators can be
written as
\begin{equation*}
\calT(t)f = U(\cdot, \cdot -t) \calR(t)f.
\end{equation*}
We can recover the evolution family from the evolution semigroup by
choosing a function $f \in\calX$ with $f(s) =x$. Then we obtain
\begin{equation}\label{eq:recovery}
U(t,s)x = (\calR(s-t) \calT(t-s)f)(s)
\end{equation}
for every $s \in \RR$ and $t \ge s$.

The generator of the right translation semigroup is essentially the
differentiation $-\frac{\dd}{\dd s}$ with domain
\begin{equation*}
\dom(-\tfrac{\dd}{\dd s}):=\calX_1:= \bigl\{ f \in \C^1(\RR; X) : f,
f' \in\calX \bigr\}.
\end{equation*}
For a family $\big(A(t), \dom(A(t))\big)_{t \in \RR}$ of unbounded
operators on $X$ we consider the corresponding multiplication
operator $\big(A(\cdot), \dom(A(\cdot))\big)$ on the space $\calX$
with domain
\begin{equation*}
\dom(A(\cdot)):=\bigl\{f \in\calX  : \: f(s) \in
\dom(A(s))\:\forall\:s \in \RR, \mbox{ and } [ s \mapsto A(s)f(s)]
\in\calX \bigr\},
\end{equation*}
and defined by
\begin{equation*}
(A(\cdot)f)(s):=A(s)f(s) \mbox{ for all } s \in \RR.
\end{equation*}

Now we characterize well-posedness for non-autonomous Cauchy
problems.
\begin{theorem}[{Nickel \cite[Theorem 2.9]{nickel97}}] \label{WellChar}
Given a Banach space $X$, and a family of linear operators
$\big(A(t), \dom(A(t))\big)_{t \in \RR}$  on $X$. The following
assertions are equivalent.
\begin{enumerate}[(i)]
\item The non-autonomous Cauchy problem $\mathrm{(NCP)}$ for the family $(A(t))_{t \in \RR}$ is well-posed (with exponentially bounded solutions).
\item There exists a unique evolution semigroup $\calT$ with
generator $(\calG, \dom(\calG))$ and an invariant core $\calD
\subseteq \calX_1 \cap \dom(\calG)$ such that
\begin{equation*} \calG f + f' = A(\cdot)f
\end{equation*}
for all $f \in \calD$.
\end{enumerate}
\end{theorem}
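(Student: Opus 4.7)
The plan is to exploit the bijective correspondence between evolution families and evolution semigroups described in Definition \ref{EHG} and formula \eqref{eq:recovery}, and then to identify the differential structure of $\mathrm{(NCP)}$ through the generator.

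For the implication (i) $\Rightarrow$ (ii), well-posedness supplies, via the preceding proposition, a unique evolution family $(U(t,s))_{t\ge s}$ solving $\mathrm{(NCP)}$. I would define the corresponding semigroup $\calT$ on $\calX$ by Definition \ref{EHG}: the semigroup law follows from the cocycle identity $U(t,r)U(r,s)=U(t,s)$, exponential boundedness from property (iii) of Definition \ref{evf}, and strong continuity from the joint strong continuity of $U$ together with the uniform continuity of $f\in\calX$. For the core I would take
$$
\calD:=\bigl\{f\in\calX_1 : f(s)\in Y_s \text{ for all } s\in\RR \text{ and } [s\mapsto A(s)f(s)]\in\calX\bigr\},
$$
refined, if necessary, to a translation-invariant dense subset. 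For $f\in\calD$ I would rewrite the difference quotient as
$$
\frac{(\calT(t)f)(s)-f(s)}{t}=\frac{1}{t}\int_{s-t}^{s}A(\tau)U(\tau,s-t)f(s-t)\,\dd\tau+\frac{f(s-t)-f(s)}{t},
$$
where the integral identity is the fundamental theorem of calculus applied to the classical solution $\tau\mapsto U(\tau,s-t)f(s-t)$, valid since $f(s-t)\in Y_{s-t}$. The first term tends to $A(s)f(s)$ uniformly in $s$ by a modulus-of-continuity argument using $A(\cdot)f\in\calX$; the second term tends to $-f'(s)$ in $\calX$ since $f\in\calX_1$. This yields simultaneously $\calD\subseteq\dom(\calG)$ and the identity $\calG f+f'=A(\cdot)f$ on $\calD$.

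For the converse (ii) $\Rightarrow$ (i), the evolution family is recovered from $\calT$ through \eqref{eq:recovery}, and its defining properties from Definition \ref{evf} mirror the corresponding semigroup properties of $\calT$. For any $f\in\calD$, the orbit $t\mapsto\calT(t)f$ is a classical solution of the abstract Cauchy problem for $\calG$ in $\calX$, and the pointwise identity $\calG f+f'=A(\cdot)f$ together with $\calD$-invariance of $\calT$ shows that $t\mapsto U(t,s)f(s)$ is a classical solution of $(\mathrm{NCP})_{s,f(s)}$. Setting $Y_s:=\{f(s):f\in\calD\}$ yields dense regularity subspaces; uniqueness and exponential boundedness transfer from the evolution family's properties, while continuous dependence on $(s,y)$ follows from joint strong continuity of $\calT$ via \eqref{eq:recovery}.

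The hard part will be the construction of $\calD$ in the first direction: one must exhibit enough regular functions in $\calX$ whose pointwise values lie in the regularity subspaces $Y_s$. I expect to handle this by convolving the orbits $s\mapsto U(t,s)y$, $y\in Y_{s_0}$, in the $s$-variable with smooth compactly supported scalar bump functions, exploiting the continuous-dependence clause of Definition \ref{Well-NCP} to guarantee that the convolutions take values in $Y_s$ in a stable, $\calR$-translation-equivariant manner; invariance of $\calD$ under $\calR(t)$, and hence under $\calT$, will then follow from the translation symmetry of the construction, so that by density $\calD$ is indeed a core for $\calG$.
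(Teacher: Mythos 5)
First, a point of comparison: the paper itself does not prove this statement --- it is imported verbatim from Nickel \cite[Theorem 2.9]{nickel97} --- so there is no in-paper argument to measure yours against. Your overall strategy (build $\calT$ from the solving evolution family, identify $\calG$ on a core by splitting the difference quotient of $(\calT(t)f)(s)=U(s,s-t)f(s-t)$ into a solution-integral part and a translation part, and run the construction backwards via \eqref{eq:recovery} for the converse) is the standard route and essentially the one taken in the original reference.

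That said, the two steps you yourself flag as delicate are where the argument, as written, does not yet close. (a) The claim that $\tfrac1t\int_{s-t}^{s}A(\tau)U(\tau,s-t)f(s-t)\,\dd\tau\to A(s)f(s)$ uniformly in $s$ ``by a modulus-of-continuity argument using $A(\cdot)f\in\calX$'' does not follow as stated: the integrand is $A(\tau)$ applied to $U(\tau,s-t)f(s-t)$, not to $f(\tau)$, so uniform continuity of $s\mapsto A(s)f(s)$ gives nothing by itself; what is needed is uniform-in-$s$ control of $A(\tau)U(\tau,\sigma)f(\sigma)-A(\sigma)f(\sigma)$ as $\tau\downarrow\sigma$, which has to be extracted from the continuous-dependence clause of Definition \ref{Well-NCP}, and that is precisely where the work lies. (b) The proposed core construction is off: convolving $s\mapsto U(t,s)y$ in the $s$-variable cannot work, since for $y\in Y_{s_0}$ the vector $U(t,s)y$ need not even produce a classical solution for $s\neq s_0$ ($y\notin Y_s$ in general), and, more fundamentally, convolution in $s$ averages values that would have to lie in the \emph{varying} subspaces $Y_{s-\sigma}$, whereas membership in your $\calD$ requires $f(s)\in Y_s$ --- a condition not preserved under such averaging. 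The construction that does work uses forward orbits: for $y\in Y_{s_0}$ the function $s\mapsto\varphi(s)U(s,s_0)y$ with $\varphi\in\C_c^\infty$ supported in $(s_0,\infty)$ takes values in $Y_s$ (solving evolution families map $Y_{s_0}$ into $Y_s$), lies in $\calX_1\cap\dom(A(\cdot))$, and the span of such products over all $s_0$ and $y\in Y_{s_0}$ is dense by a partition-of-unity argument and is $\calT$-invariant because $\calT(t)\bigl(\varphi\cdot U(\cdot,s_0)y\bigr)=\varphi(\cdot-t)\,U(\cdot,s_0)y$; a dense invariant subspace of $\dom(\calG)$ is then a core. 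Until (a) and (b) are repaired the implication (i) $\Rightarrow$ (ii) is incomplete; your sketch of the converse is sound modulo the routine conversion of the $t$-derivative of $U(s,s-t)f(s-t)$ into the derivative of $t\mapsto U(t,s)x$, which you should also make explicit.
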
%

Conditions implying well-posedness are generally divided into
assumptions of \emph{``parabolic''} and of \emph{``hyperbolic''}
type. Roughly speaking, the main difference between these two types
is that in the parabolic case we assume all $A(t)$ being generators
of analytic semigroups, while in the hyperbolic case we assume the
stability for certain products instead. In both cases one has to add
some continuity assumption on the mapping $t \mapsto A(t)$.
We mention only a typical and quite simple version for each type.

\begin{assumptions}[\bdef{Parabolic case}] \label{asu-para} \label{A(inh-para)}
\rule{0pt}{0pt}
\begin{enumerate}[(P1)]
\item \label{P1} The domain $D:= \dom(A(t))$ is dense in $X$
and is independent of $t \in \RR$.
\item\label{P2} For each $t \in \RR$ the operator $A(t)$ is
the generator of an analytic semigroup $\ee^{\cdot A(t)}$. For all
$t \in \RR$, the resolvent $R(\lambda, A(t))$ exists for all
$\lambda \in \CC$ with $\Re \lambda \ge 0$ and there is a constant
$M \ge1$ such that
\begin{equation*} \| R(\lambda, A(t)) \| \leq \frac{M}{|\lambda| +1}
\end{equation*}
for $\Re \, \lambda \ge 0$, $t \in \RR$. The semigroups $\ee^{\cdot
A(t)}$ satisfy $\|\ee^{s A(t)}\| \leq M\ee^{\omega s}$
for absolute constants $\omega < 0$ and $M \ge 1$.
\item \label{P3} There exist constants $L \ge 0$ and
$0 < \alpha \leq 1$ such that
\begin{equation*}
\| (A(t) - A(s))A(0)^{-1} \| \leq L |t-s|^{\alpha} \mbox{ for all }
t,s \in \RR.
\end{equation*}
\end{enumerate}
\end{assumptions}
\begin{assumptions}[\bdef{Hyperbolic case}] \label{asu-hyp}
\rule{0pt}{0pt}
\begin{enumerate}[(H1)]
\item The family $(A(t))_{t \in \RR}$ is \bdef{stable},
i.e., all operators $A(t)$ are generators of $C_0$-semi\-groups and
there exist constants $M \ge 1$ and $\omega \in \RR$ such that
\begin{equation*} (\omega, \infty) \subset \rho(A(t)) \quad \mbox{for all } t \in \RR
\end{equation*}
and
\begin{equation*} \label{stab_hyp}
\Bigl\| \prod_{j=1}^{k} R(\lambda, A(t_j)) \Bigr\| \leq M (\lambda
-\omega)^{-k} \quad \mbox{for all } \lambda > \omega
\end{equation*}
and every finite sequence $- \infty < t_1 \leq t_2 \leq \dots \leq
t_k < \infty$, $k\in \NN$.
\item There exists a densely embedded subspace
$Y \hookrightarrow X$, which is a core for every $A(t)$ such that
the family of the parts $(A_{|Y}(t))_{t \in \RR}$ in $Y$ is a stable
family on the space $Y$.
\item The mapping
$\RR \ni t \mapsto A(t) \in {\mathcal L}(Y,X)$ is uniformly
continuous.
\end{enumerate}
\end{assumptions}

\begin{remark}
	Since the classical papers of Evans \cite{Evans}, Howland \cite{Howland}, and Neidhardt \cite{Neidhardt1,Neidhardt2,Neidhardt3},
    evolution semigroups have been intensively used to study non-autonomous evolution equations. Here, various results on well-posedness
	as well as qualitative behavior of these equations were obtained. For a quite comprehensive overview and a long list
	of different variants we refer, e.g., to Nagel and Nickel \cite{Nagel-Nickel}, Neidhardt and Zagrebnov \cite{Neidhardt-Zagrebnov},
	Nickel \cite{nickel97, nickel00}, Nickel and Schnaubelt \cite{Nickel-Schnaubelt} and Schnaubelt \cite{schnaubelt02}.
	The recent article Neidhardt and Zagrebnov \cite{Neidhardt-Zagrebnov} focuses on (quite general) assumptions of the {``hyperbolic''} type and obtains well-posedness results -- in a general sense -- for non-autonomous evolution equations by properly defining and analyzing the ``sum'' $-\frac{\dd}{\dd s} + A(\cdot)$ yielding the generator of the associated evolution semigroup.
	In contrast to that approach, in our paper we simply assume well-posedness of our evolution equation under any appropriate
	(parabolic or hyperbolic) condition. Therefore, the solving evolution family, the corresponding evolution semigroup,
	and its generator are well defined by assumption. Our main interest is then, how these solutions can be approximated
	(numerically) by splitting procedures.
\end{remark}


\section{A product formula}\label{sec:4}

In this section we present a product formula for the solutions of
the non-auto\-no\-mous Cauchy problem \eqref{eq:perturbed}. In the
case $B(t)\equiv0$, this formula essentially goes back to Kato
\cite{Kato70}. This splitting-type formula is especially useful if
for every time $r\in\RR$ we are able to solve effectively the
\emph{autonomous} Cauchy problems
\begin{align*}
\dt u(t)&=A(r)u(t) \tag{Eq.~1}\\
\dt v(t)&=B(r)v(t) \tag{Eq.~2}
\end{align*}
with appropriate initial conditions. This is usually the case if the
operators $A(\cdot)$ and $B(\cdot)$ are partial differential
operators with time dependent coefficients or time dependent
multiplication operators. Formally, this means that we assume that
the operators $A(r)$ and $B(r)$ generate strongly continuous
operator semigroups, which we denote by using the exponential
notation as $\ee^{\cdot A(r)}$ and $\ee^{\cdot B(r)}$, respectively.
We devote this section to the simplest product formula arising from
the  \emph{sequential splitting}.

Suppose we want to determine the solution of $\mathrm{(NCP)}$ at
time $t+s>0$ and hence take the time-step $\Dt=t/n$. We start with
the known initial value $u^\mathrm{sq}(s)=x$, then solve the first
(Eq.~1) equation on the time interval $[s,s+\Dt]$ taking $r=s$. Then
we take the result $u^{(1)}_1(s+\Dt)$ as the initial value for the
second equation (Eq.~2) which we solve again on $[s,s+\Dt]$. With
this new result $u^\mathrm{sq}(s+\Dt):=u_2^{(1)}(s+\Dt)$ as initial
value for (Eq.~1) we restart the procedure and iterate it $n$ times.
Formally:
\begin{align*}
&\left\{
\begin{aligned}
\tfrac{\dd}{ \dd t}u_1^{(k)}(t)&=A(s+(k-1)\Dt)u_1^{(k)}(t), \qquad
t\in \big(s+(k-1)\Dt,s+k\Dt\big], \smallskip \\
u_1^{(k)}(s+(k-1)\Dt)&=u^\mathrm{sq}(s+(k-1)\Dt), \smallskip \\
\end{aligned} \right.\\
&\left\{ \begin{aligned} \tfrac{\dd}{\dd t}
u_2^{(k)}&=B(s+(k-1)\Dt)u_2^{(k)}(t), \qquad
t\in \big(s+(k-1)\Dt,s+k\Dt\big], \smallskip \\
u_2^{(k)}(s+(k-1)\Dt)&=u_1^{(k)}(s+k\Dt),
\end{aligned} \right.\\
&\hskip3.7em\begin{aligned}
u^\mathrm{sq}(s+k\Dt)&:=u_2^{(k)}(s+k\Dt),\end{aligned}
\end{align*}
with $k=1,2,\ldots,n$. Using that for $r\in[0,\Dt]$,
\begin{equation*}
u_1^{(k)}(s+(k-1)\Dt+r)=\ee^{r
A(s+(k-1)\Dt)}u^\mathrm{sq}(s+(k-1)\Dt),
\end{equation*}
and that
\begin{align*}
u_2^{(k)}(s+(k-1)\Dt+r)=& \ee^{rB(s+(k-1)\Dt)}u_1^{(k)}(s+k\Dt) \\
=& \ee^{rB(s+(k-1)\Dt)}\ee^{\Dt
A(s+(k-1)\Dt)}u^\mathrm{sq}(s+(k-1)\Dt),
\end{align*}
we see by a simple induction argument that the  split solution
$u^\mathrm{sq}(s+k\Dt)$, obtained by applying the sequential
splitting procedure, can be written as
\begin{equation}
u^\mathrm{sq}(s+k\Dt)=\prod_{p=0}^{k-1} \ee^{\Dt B(s+p\Dt)}\ee^{\Dt
A(s+p\Dt)}x \qquad \mbox{for } k\in\mathbb{N},\, k\Dt\leq t, \mbox{
and } x\in X. \label{spl_sq_1}
\end{equation}
In what follows, we study the convergence of this expression.
\begin{assumptions}\label{ass:1} Suppose that
\begin{enumerate}[a)]
\item the non-autonomous Cauchy problem corresponding to the operators $(A(\cdot)+ B(\cdot))$ is well-posed,
\item \bdef{(Stability)} the operators $A(r)$ and $B(r)$ are generators of $C_0$-semigroups $\ee^{\cdot A(r)}$, $\ee^{\cdot B(r)}$ of type $(M,\omega)$ ($M\geq 1$ and $\omega\in \RR$) on the Banach space $X$ and, therefore,
\begin{equation*}
(\omega, \infty) \subset \rho(A(r))\cap\rho(B(r)) \quad \mbox{for
all } r \in \RR.
\end{equation*}
Moreover, let
\begin{equation*} \label{stab}
\sup_{s\in \RR}\Bigl\| \prod_{p=n}^{1} \bigl(
\ee^{\frac{t}{n}B(s-\frac{pt}{n})}\ee^{\frac{t}{n}A(s-\frac{pt}{n})}\bigr)
\Bigr\| \leq M \ee^{\omega t}, \text{ and }
\end{equation*}
\item (\bdef{Continuity}) the maps
\begin{equation*}
t\mapsto R(\lambda,A(t))x,\qquad t\mapsto R(\lambda,B(t))x
\end{equation*}
are continuous for all $\lambda>\omega$ and $x\in X$.
\end{enumerate}
\end{assumptions}

We denote the evolution family solving \eqref{eq:perturbed} by $W$
and the corresponding evolution semigroup, generated by the closure
$\Bar\calC$ of $\calC := -\frac{\dd}{\dd s} + A(\cdot) + B(\cdot)$,
by $\calW$.

As we shall see in a moment, Assumption \ref{ass:1} yields that the
multiplication operators $A(\cdot)$, $B(\cdot)$ with appropriate
domain generate strongly continuous multiplication semigroups on
$\C_0(\RR;X)$ (for more on this matter we refer to Engel and Nagel
\cite[Sec.~III.4.13]{Engel-Nagel} and Graser \cite{Graser}).

\begin{theorem}\label{th:firstproduct}
Under Assumption \ref{ass:1} one has the convergence
\begin{equation}
\label{eq:ev_product} W(t,s)x= \lim_{n\to\infty} \prod_{p=0}^{n-1}
\bigl(\ee^{\frac{t-s}{n}B(s+
\frac{p(t-s)}{n})}\ee^{\frac{t-s}{n}A(s+\frac{p(t-s)}{n})} \bigr)x
\end{equation}
for all $x\in X$, locally uniformly in $s,t$ with $s\leq t$.
\end{theorem}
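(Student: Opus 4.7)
The strategy is to lift the problem to the autonomous abstract Cauchy problem on $\calX = \C_0(\RR;X)$ generated by $\Bar\calC$; by well-posedness (hypothesis \ref{ass:1}(a)) the associated $C_0$-semigroup is precisely the evolution semigroup $\calW$ of $W$. I would then apply Chernoff's Theorem \ref{thm:chernoff} on $\calX$ to the finite-difference method
\begin{equation*}
F(h) := \calR(h)\,\calS_B(h)\,\calS_A(h),\qquad h\geq 0,
\end{equation*}
where $\calR$ is the right-translation semigroup and $\calS_A,\calS_B$ are the strongly continuous multiplication semigroups on $\calX$ generated by $A(\cdot),B(\cdot)$ (whose existence follows from \ref{ass:1}(b)--(c), cf.~\cite[Sec.~III.4.13]{Engel-Nagel}, \cite{Graser}). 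Unwinding the definitions one finds $(F(h)f)(\sigma) = \ee^{hB(\sigma-h)}\ee^{hA(\sigma-h)}f(\sigma-h)$, and a simple induction yields that $(F(h)^n f)(\sigma)$ equals the ordered product $\ee^{hB(\sigma-h)}\ee^{hA(\sigma-h)}\cdots\ee^{hB(\sigma-nh)}\ee^{hA(\sigma-nh)}$ applied to $f(\sigma-nh)$. Substituting $\sigma = t$ and $h=(t-s)/n$ reproduces \emph{exactly} the product on the right-hand side of \eqref{eq:ev_product} applied to $f(s)$.

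Next I would verify Chernoff's three hypotheses. Strong continuity of $h\mapsto F(h)$ and $F(0)=I$ are inherited from $\calR,\calS_A,\calS_B$. For stability, the $\calX$-operator norm of $F(h)^n$ equals the supremum over $\sigma$ of the operator norm on $X$ of the above product, which is precisely the quantity bounded by $M\ee^{\omega nh}$ in hypothesis \ref{ass:1}(b). For consistency, Theorem \ref{WellChar} applied to the family $A(\cdot)+B(\cdot)$ supplies an invariant core $\calD\subseteq\calX_1\cap\dom(\Bar\calC)$ on which $\Bar\calC f = -f' + A(\cdot)f + B(\cdot)f$. Writing
\begin{equation*}
F(h)f-f = \calR(h)(\calS_B(h)-I)\calS_A(h)f + \calR(h)(\calS_A(h)-I)f + (\calR(h)-I)f
\end{equation*}
and dividing by $h$, the three summands converge in $\calX$ to $B(\cdot)f$, $A(\cdot)f$ and $-f'$ respectively as $h\to 0$, yielding $(F(h)f-f)/h \to \Bar\calC f$ for $f\in\calD$.

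Chernoff's Theorem then delivers norm-convergence $F((t-s)/n)^n f \to \calW(t-s)f$ in $\calX$, uniformly on compacts in the time parameter $t-s$. Choosing $f\in\calX$ with $f\equiv x$ on a compact set containing the starting times of interest (e.g.~$f = \phi\otimes x$ for a plateau function $\phi\in\C_0(\RR)$) and using the identity $(\calW(t-s)f)(t) = W(t,s)f(s) = W(t,s)x$, evaluation at $\sigma = t$ produces \eqref{eq:ev_product} locally uniformly in $(s,t)$. The main obstacle I anticipate lies in the consistency step: the three limits in the decomposition above must be controlled in the \emph{uniform} norm of $\calX$ rather than merely pointwise, which demands that $\calD$ be simultaneously contained in $\dom(A(\cdot))$, $\dom(B(\cdot))$ and $\calX_1$ and that the product $(\calS_B(h)-I)/h\cdot\calS_A(h)f$ behave well as $h\to 0$. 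These technicalities rely crucially on the continuity hypothesis \ref{ass:1}(c) together with the uniform continuity intrinsic to $\C_0(\RR;X)$; Theorem \ref{WellChar} is what guarantees a core $\calD$ with all the required domain properties.
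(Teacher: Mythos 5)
Your proposal follows essentially the same route as the paper's own proof: lift to the evolution semigroup on $\calX=\C_0(\RR;X)$, form the finite difference method $F(h)=\calR(h)\ee^{hB(\cdot)}\ee^{hA(\cdot)}$, check stability from Assumption \ref{ass:1}(b), verify consistency on $\calX_1\cap\dom(A(\cdot))\cap\dom(B(\cdot))$ by the same telescoping decomposition (the paper places the difference quotient on the $A$-factor, i.e.\ $\calR(h)\ee^{hB(\cdot)}\tfrac{\ee^{hA(\cdot)}f-f}{h}$, which is marginally cleaner than your arrangement where $(\calS_B(h)-I)/h$ acts on the moving target $\calS_A(h)f$), invoke Chernoff, and recover $W(t,s)$ via \eqref{eq:recovery}. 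The argument is correct, including your correct observation that the range condition for Chernoff is automatic because well-posedness already makes $\overline{\calC}$ a generator.
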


\begin{proof}
The main idea of the proof is analogous to the one in Nickel
\cite[Proposition 3.2]{nickel00}. Consider the  semigroups
$\ee^{\cdot A(r)}$ and $\ee^{\cdot B(r)}$ for given $r\in \RR$. By
the uniform growth assumption in \ref{ass:1}.b) on the semigroups,
for fixed $t\geq 0$ the function $r\mapsto \ee^{tA(r)}f(r)$ vanishes
at infinity whenever $f$ has this property. We also have that  the
function $r\mapsto \ee^{tA(r)}$ is strongly continuous. Indeed,  by
the Trotter-Kato Theorem (see Engel and Nagel
\cite[Thm.~III.4.8]{Engel-Nagel}) we even obtain that
$\RR_+\times\RR \ni (t,r)\mapsto \ee^{tA(r)}$ is strongly
continuous. All these reasonings are, of course, true if $A(r)$ is
replaced by $B(r)$. Let now $f\in\BUC(\RR;X)$. Then $r\mapsto
\ee^{tA(r)}f(r)$ is continuous, too. We have therefore shown that the
multiplication semigroups $\ee^{tA(\cdot)}$ and $\ee^{tB(\cdot)}$,
generated by
 the multiplication operators $A(\cdot)$ and $B(\cdot)$, both act on the space
 $\calX= \C_0(\RR;X)$, see also Graser \cite{Graser}.
It can be seen by induction that
\begin{equation*}
\bigl(\mathcal R \bigl(\tfrac{t}{n}\bigr) \ee^{\frac{t}{n}B(\cdot)}
\ee^{\frac{t}{n}A(\cdot)} \bigr)^n f(\cdot) = \prod_{p=n}^{1}
\bigl(\ee^{\frac{t}{n}B(\cdot - \frac{pt}{n})}
\ee^{\frac{t}{n}A(\cdot - \frac{pt}{n})}\bigr)\mathcal R
(t)f(\cdot).
\end{equation*}

The stability assumption \ref{ass:1}.b) immediately implies the
stability for the finite difference method
$F(h):=\calR(h)\ee^{hB(\cdot)}\ee^{hA(\cdot)} $. Consistency is
standard to check:  take $f\in \calX_1\cap \dom(A(\cdot))\cap
\dom(B(\cdot))$. Then we can write 
\begin{align*}
\lim_{h\downarrow 0}\frac{F(h)f-f}{h}&=\lim_{h\downarrow 0}\Bigl[\calR(h)\ee^{hB(\cdot)}\frac{ \ee^{hA(\cdot)}f-f}{h}+\calR(h)\frac{\ee^{hB(\cdot)}f-f}{h}+\frac{\calR(h)f-f}{h}\Bigr]\\
&=A(\cdot)f+B(\cdot)f-f'.
\end{align*}

By our well-posedness assumptions, the closure of the operator
$\mathcal{C} = -\frac{\dd}{\dd s} + B(\cdot) + A(\cdot)$ generates a
strongly continuous semigroup on $\calX$, hence the set
$(\lambda-\calC)\dom(\calC)$ is dense in $\calX$. By the stability
assumption we can apply Chernoff's Theorem \ref{thm:chernoff} with
the three operators $-\frac{\dd}{\dd s}$, $A(\cdot)$, $B(\cdot)$,
and obtain that the evolution semigroup generated by $\overline
\calC$ is given by
\begin{equation*}
\calW(t)f = \lim_{n\to\infty} \prod_{p=n}^{1}
\bigl(\ee^{\frac{t}{n}B(\cdot - \frac{pt}{n})}
\ee^{\frac{t}{n}A(\cdot - \frac{pt}{n})}\bigr)f(\cdot-t).
\end{equation*}
The above limit is to be understood in the topology of $\calX$, that
is, in the uniform topology. By using this, and by applying the
formula \eqref{eq:recovery} from the previous section, we can
recover the evolution family from the evolution semigroup and arrive
at the formula
\begin{equation*}
W(t,s)x= \lim_{n\to\infty} \prod_{p=n}^{1}
\bigl(\ee^{\frac{t-s}{n}B(t - \frac{p(t-s)}{n})}
\ee^{\frac{t-s}{n}A(t - \frac{p(t-s)}{n})}\bigr)x,
\end{equation*}
from which the assertion follows.
\end{proof}

\begin{remark}
In the proof of Theorem \ref{th:firstproduct} we have used that the
semigroups $\ee^{\cdot A(r)}$ and $\ee^{\cdot B(r)}$ map
$\C_0(\RR;X)$ into itself. If $\ee^{\cdot A(r)}$ and $\ee^{\cdot
B(r)}$ are uniformly strongly continuous in $r\in \RR$, then one
could also work  on the space $\calX=\BUC(\RR;X)$.
\end{remark}

\begin{remark}
The stability condition b) is automatically satisfied, if $A(t)$ and
$B(t)$ are generators of quasi-contractive semigroups with uniform
exponential bound $\omega$ for all $t$.
\end{remark}

\begin{remark}
In Vuillermot et al.~\cite{Vuillermot_etal,VWZ2009}, the authors
prove the representation formula \eqref{eq:ev_product} where $A(t)$
and $B(t)$ are generators of contraction semigroups, the family
$A(\cdot)$ satisfies a version of the so-called parabolic condition
and the family $B(\cdot)$ is a small perturbation. Theorem
\ref{th:firstproduct} can be seen as a generalization of this result
and can be applied not only in a larger class of parabolic problems
but also in the hyperbolic case. In \cite{Vuillermot2010}
Vuillermot proves a Chernoff-type approximation theorem for
time-dependent operator families. Under appropriate consistency and stability
assumptions it is possible to derive formula \eqref{eq:ev_product}
from this result (as done in \cite{Vuillermot2010}) instead of
proving it by the application of the classical Chernoff's Theorem to
evolution semigroups. It is however amongst our aims to emphasize
that semigroup techniques may be used to prove approximation
results also for non-autonomous problems.
\end{remark}

\begin{remark}
In case $B(t)\equiv0$, we recover the well-known representation
formula
\begin{equation*}
U(t,s)x = \lim_{n\to\infty} \prod_{p=0}^{n-1} \ee^{\frac{t-s}{n}A(s+
\frac{p(t-s)}{n})} x,
\end{equation*}
see Nickel \cite[Proposition 3.2]{nickel00} and Schnaubelt
\cite[Theorem 2.1]{Schnaubelt99}. Again, the stability condition
reduces essentially to the classical stability condition of Kato
\cite{Kato70}.
\end{remark}

\begin{remark}
It is straightforward to check that if one of the equations is
autonomous, e.g., $A(t)\equiv A$, then we arrive at the same product
formula but we can split the original operator $\calC$ into two (and
not three) operators, namely into $-\frac{\dd}{\dd s} + A$ and
$B(\cdot)$.
\end{remark}


\section{Operator splitting}\label{sec:5}
In this section we assume that we can solve the
\emph{non-autonomous} equations
\begin{align}
\tag{Eq.~A}\label{Eq1} \dt u(t)&=A(t)u(t), \\
\tag{Eq.~B}\label{Eq2} \dt v(t)&=B(t)v(t)
\end{align}
and want to construct the solution of \eqref{eq:perturbed} applying an operator splitting procedure. For the sake of simplicity we
only present the case of sequential splitting: We start with the
initial value $u^\mathrm{sq}(s)=x$, then solve the first equation on
the time interval $[s,s+\Dt]$. Then we take this $u^{(1)}_1(s+\Dt)$
as the initial value for the second equation which we solve on
$[s,s+\Dt]$. With this result
$u^\mathrm{sq}(s+\Dt):=u_2^{(1)}(s+\Dt)$ as initial value for
\eqref{Eq1} we restart the procedure and iterate it $n$ times.
Formally:
\begin{align*}
&\left\{
\begin{aligned}
\tfrac{\dd}{\dd t}u_1^{(k)}(t)&=A(t)u_1^{(k)}(t), \qquad
t\in \big(s+(k-1)\Dt,s+k\Dt\big], \smallskip \\
u_1^{(k)}(s+(k-1)\Dt)&=u^\mathrm{sq}(s+(k-1)\Dt), \smallskip \\
\end{aligned} \right.\\
&\left\{ \begin{aligned} \tfrac{\dd}{\dd
t}u_2^{(k)}(t)&=B(t)u_2^{(k)}(t), \qquad
t\in \big(s+(k-1)\Dt,s+k\Dt\big], \smallskip \\
u_2^{(k)}(s+(k-1)\Dt)&=u_1^{(k)}(s+k\Dt),
\end{aligned} \right.\\
&\hskip3.7em\begin{aligned}
u^\mathrm{sq}(s+k\Dt)&:=u_2^{(k)}(s+k\Dt),\end{aligned}
\end{align*}
for $k=1,2,\ldots,n$. If $U$ and $V$ denote the evolution families
solving the above equations (Eq.~A)-(Eq.~B), then we have
 $$u_1^{(k)}(r)=U(r,s+(k-1)\Dt)u^\mathrm{sq}(s+(k-1)\Dt),$$ and
\begin{align*}
u_2^{(k)}(r)&=V(r,s+(k-1)\Dt)u_1^{(k)}(s+k\Dt)\\
&=V(r,s+(k-1)\Dt)U(s+k\Dt,s+(k-1)\Dt)u^\mathrm{sq}(s+(k-1)\Dt).
\end{align*}
By this the  splitting solution $u^\mathrm{sq}$ can be written as
\begin{equation*}
u^\mathrm{sq}(s+k\Dt)=\prod_{p=0}^{k-1}\Bigl(V(s+(p+1)\Dt,s+p\Dt)U(s+(p+1)\Dt,s+p\Dt)\Bigr)x.
\end{equation*}
In the following we analyze the convergence of this procedure.

\begin{assumptions}\label{ass:2}
Suppose that
\begin{enumerate}[a)]
\item the non-autonomous Cauchy problems corresponding to the operators $A(\cdot)+ B(\cdot)$, $A(\cdot)$, and $B(\cdot)$ are well-posed, and that
\item \bdef{(Stability)} there exist $M\geq 1$ and $\omega\in\RR$ such that
\begin{equation*}
\sup_{s\in\RR}\Bigl\| \prod_{p=n-1}^{0}V\bigl(s-\tfrac{pt}{n},s -
\tfrac{(p+1)t}{n}\bigr) U\bigl(s - \tfrac{pt}{n},s -
\tfrac{(p+1)t}{n}\bigr)\Bigr\| \leq M\ee^{\omega t}.
\end{equation*}
\end{enumerate}
\end{assumptions}

Here, again, the evolution family solving the Cauchy problem corresponding
to $A(\cdot)$ and $B(\cdot)$, will be denoted by $U$ and $V$,
respectively. Further, we denote the evolution family solving
\eqref{eq:perturbed} by $W$ and the corresponding evolution
semigroup, generated by the closure of $\mathcal{C} =
-\frac{\dd}{\dd s} + A(\cdot) + B(\cdot)$, by $\calW$.

\begin{theorem}\label{thm:evolfamkonv}
Under Assumptions \ref{ass:2} one has the convergence
\begin{equation*}
W(t,s)x\hskip-0.2em  = \hskip-0.3em \lim_{n\to\infty}\hskip-0.1em
\prod_{p=0}^{n-1}\hskip-0.3em  V  (s + \tfrac{(p+1)(t-s)}{n},s +
\tfrac{p(t-s)}{n}) \hskip-0.1em U ( s+ \tfrac{(p+1)(t-s)}{n}, s +
\tfrac{p(t-s)}{n})x
\end{equation*}
for all $x\in X$.
\end{theorem}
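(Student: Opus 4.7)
The plan is to mimic the proof of Theorem~\ref{th:firstproduct}: lift the splitting to a finite difference method on $\calX = \C_0(\RR;X)$, apply Chernoff's Theorem~\ref{thm:chernoff} to identify the limit with the evolution semigroup $\calW$ generated by $\overline{\calC}$ (where $\calC = -\frac{\dd}{\dd s} + A(\cdot) + B(\cdot)$), and then read off the assertion from the recovery formula~\eqref{eq:recovery}. By Assumption~\ref{ass:2}.a and Theorem~\ref{WellChar}, the three evolution families $U$, $V$, $W$ induce $C_0$-evolution semigroups $\calT_U$, $\calT_V$, $\calW$ on $\calX$ with generators $\calG_U$, $\calG_V$, $\overline{\calC}$ respectively.

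First I would set
\[
F(h)f(s) := V(s,s-h)\,U(s,s-h)\,f(s-h), \qquad f\in\calX,\ s\in\RR,\ h\ge 0,
\]
and observe the factorisation $F(h) = \calT_V(h)\,\calR(-h)\,\calT_U(h)$ on $\calX$, where $\calR$ has been extended to the bounded $C_0$-group of translations. A one-line induction yields
\[
(F(h)^n f)(s) = \Bigl[\prod_{p=n-1}^{0} V(s-ph,s-(p{+}1)h)\,U(s-ph,s-(p{+}1)h)\Bigr] f(s-nh),
\]
so Assumption~\ref{ass:2}.b is precisely the stability bound $\|F(t/n)^n\|_{\calX}\le M\ee^{\omega t}$ required by Chernoff's theorem.

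For consistency I would work on a common invariant core $\calD$ for $\overline{\calC}$, contained in $\dom(\calG_U)\cap\dom(\calG_V)\cap\calX_1$ and on which $\calG_U$, $\calG_V$ act as $-\frac{\dd}{\dd s}+A(\cdot)$ and $-\frac{\dd}{\dd s}+B(\cdot)$. The Trotter-type telescoping
\[
F(h)-I = (\calT_V(h)-I)\calR(-h)\calT_U(h) + (\calR(-h)-I)\calT_U(h) + (\calT_U(h)-I),
\]
divided by $h$, then converges in $\calX$ as $h\downarrow 0$ to $\calG_V f + f' + \calG_U f = -f'+A(\cdot)f+B(\cdot)f = \calC f$ (the generator of the left translation group being $+\frac{\dd}{\dd s}$). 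Since $\overline{\calC}$ already generates $\calW$, the range condition of Chernoff's theorem is automatic once $\calD$ is chosen to be a core for $\overline{\calC}$. Chernoff's Theorem~\ref{thm:chernoff} therefore yields $F(t/n)^n f\to \calW(t)f$ in $\calX$; selecting $f\in\calX$ with $f(s)=x$ and invoking \eqref{eq:recovery} exactly as at the end of the proof of Theorem~\ref{th:firstproduct} converts this convergence into the claimed pointwise convergence of $W(t,s)x$, locally uniformly in $s\le t$.

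The main obstacle is the construction of this common core $\calD$. Theorem~\ref{WellChar} individually supplies invariant cores for each of the generators $\calG_U$, $\calG_V$, $\overline{\calC}$, but one needs a single dense subspace that is simultaneously (i) a core for $\overline{\calC}$, (ii) contained in $\dom(\calG_U)\cap\dom(\calG_V)\cap\calX_1$, and (iii) on which $\calG_U$ and $\calG_V$ restrict to the announced first-order differential expressions. In the parabolic setting (Assumption~\ref{asu-para} applied to each of $A(\cdot)$, $B(\cdot)$, $A(\cdot)+B(\cdot)$) the constancy of the common domain $D$ makes $\C_c^1(\RR;D)$ work at once; in the hyperbolic setting the common Kato subspace $Y$ together with $\calX_1$ plays the same role. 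This regularity step is the only place where structural information beyond the bare well-posedness of Assumption~\ref{ass:2}.a might have to be invoked.
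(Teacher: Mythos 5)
Your proposal follows essentially the same route as the paper's proof: the operator $F(h)=\calT_V(h)\calR(-h)\calT_U(h)$ is exactly the paper's decomposition $\mathcal F(h)\mathcal G(h)$ with $\mathcal F(t)=V(\cdot,\cdot-t)$ and $\mathcal G(t)=U(\cdot,\cdot-t)\calR(t)$, the induction formula and the identification of Assumption \ref{ass:2}.b with Chernoff stability coincide, and the passage from $\calW$ back to $W(t,s)$ via \eqref{eq:recovery} is the same. Your explicit telescoping for consistency and your discussion of the need for a common invariant core for $\calG_U$, $\calG_V$ and $\overline{\calC}$ are in fact more careful than the paper, which disposes of this step with a one-line reference to the ``analogous straightforward calculation'' in Theorem \ref{th:firstproduct}.
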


\begin{proof}
In the space $\mathcal{X}$, we define
\begin{align*}
\mathcal F(t) &:= V(\cdot, \cdot - t)\\
\mbox{and}\quad\quad\quad\mathcal G(t) &:= U(\cdot, \cdot -
t)\mathcal R(t).
\end{align*}
Inductively, one can see that
\begin{align*}
\bigl(\mathcal F(\tfrac{t}{n})\mathcal G(\tfrac{t}{n})\bigr)^{n} f &= \bigl(V(\cdot, \cdot-\tfrac{t}{n}) U(\cdot, \cdot-\tfrac{t}{n})\mathcal R(\tfrac tn)\bigr)^n f \\
&= \prod_{p=n-1}^{0}V\bigl(\cdot-\tfrac{pt}{n},\cdot -
\tfrac{(p+1)t}{n}\bigr) U\bigl(\cdot - \tfrac{pt}{n},\cdot -
\tfrac{(p+1)t}{n}\bigr)f(\cdot-t).
\end{align*}

By our assumptions, the closure $\overline\calC$ of the operator
$\mathcal{C} = -\frac{\dd}{\dd s} + A(\cdot) + B(\cdot)$ generates a
strongly continuous semigroup on $\calX$, and hence the set
$(\lambda-\calC)\dom(\calC)$ is dense. Straightforward calculation
analogous to the one in the proof of Theorem \ref{th:firstproduct}
yields that $\bigl(\mathcal F(\cdot) \mathcal
G(\cdot)\bigr)'(0)f=\calC f$ for $f\in \dom(\calC)$. Hence, by the
stability assumption, we can apply Chernoff's Theorem to this
function and obtain that the evolution semigroup generated by
$\overline \calC$ is given by
\begin{align*}
\calW(t)f = \lim_{n\to\infty}
\prod_{p=n-1}^{0}V\bigl(\cdot-\tfrac{pt}{n},\cdot -
\tfrac{(p+1)t}{n}\bigr) U\bigl(\cdot - \tfrac{pt}{n},\cdot -
\tfrac{(p+1)t}{n}\bigr)f(\cdot-t).
\end{align*}
From this, by picking some $f\in \calX$ with $f(s)=x$, we obtain for
the evolution family
\begin{align*}
&W(t,s)x =\\
&=\lim_{n\to\infty} \prod_{p=n-1}^{0}V\bigl(t-\tfrac{p(t-s)}{n},t - \tfrac{(p+1)(t-s)}{n}\bigr) U\bigl(t - \tfrac{p(t-s)}{n},t - \tfrac{(p+1)(t-s)}{n}\bigr)x \\
&= \lim_{n\to\infty} \prod_{p=0}^{n-1} V \bigl (s +
\tfrac{(p+1)(t-s)}{n},s + \tfrac{p(t-s)}{n}\bigr) U \bigl( s+
\tfrac{(p+1)(t-s)}{n}, s + \tfrac{p(t-s)}{n}\bigr)x,
\end{align*}
which was to be proved.
\end{proof}

\begin{remark}
Note that the stability condition is trivially satisfied if the
evolution families $U$ and $V$ are quasi-contractive, i.e., if
$M\leq 1$ can be taken in Definition \ref{evf} (iii). In general, as
usual with stability assumptions, it is rather hard to verify.
\end{remark}

Using similar arguments but a different decomposition, we arrive at
a different splitting formula using evolution families corresponding
to different (time-rescaled) evolution equations.

\begin{proposition}
Suppose that the operator families $A(\cdot/2)$, $B(\cdot/2)$ and
$A(\cdot)+B(\cdot)$ generate the evolution families $\widetilde U$,
$\widetilde V$ and $W$, respectively. Assume furthermore that there
is $M\geq 1$ and $\omega\in\RR$ such that
\begin{equation*}
\sup_{s\in\RR}\Bigl\| \prod_{p=n-1}^{0}\widetilde
V\bigl(2s-\tfrac{2pt}{n},2s - \tfrac{(2p+1)t}{n}\bigr) \widetilde
U\bigl(2s - \tfrac{(2p+1)t}{n},2s - \tfrac{(2p+2)t}{n}\bigr)\Bigr\|
\leq M \ee^{\omega t}.
\end{equation*}
Then we have
\begin{align*}
&W(t,s)x \\&= \lim_{n\to\infty} \prod_{p=0}^{n-1} \widetilde V \bigl
(2s + \tfrac{2(p+1)(t-s)}{n},2s + \tfrac{(2p+1)(t-s)}{n}\bigr)
\widetilde U \bigl( 2s+ \tfrac{(2p+1)(t-s)}{n}, 2s +
\tfrac{2p(t-s)}{n}\bigr)x.
\end{align*}
\end{proposition}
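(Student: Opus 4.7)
The strategy is to adapt the evolution-semigroup argument used for Theorems \ref{th:firstproduct} and \ref{thm:evolfamkonv}, but with a time-rescaling built into the finite-difference method so as to accommodate the dilated evolution families $\widetilde U$ and $\widetilde V$. The formula's arguments $2s + k(t-s)/n$ suggest that the operator acting on $\calX = \C_0(\RR;X)$ should evaluate $\widetilde U, \widetilde V$ at first argument $2s$ rather than $s$; Chernoff's Theorem \ref{thm:chernoff} together with the recovery identity \eqref{eq:recovery} and a final reindexing will then yield the claim.

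Concretely, I would define on $\calX$
\[
\mathcal F(h) := \widetilde V(2\cdot, 2\cdot - h)\,\widetilde U(2\cdot - h, 2\cdot - 2h)\,\calR(h),
\]
so that $\mathcal F(h)f(s) = \widetilde V(2s, 2s-h)\,\widetilde U(2s-h, 2s-2h)\,f(s-h)$. Boundedness on $\calX$ and strong continuity of $h \mapsto \mathcal F(h)$ follow from the exponential boundedness and strong continuity of $\widetilde U$, $\widetilde V$, and $\calR$. A direct induction, as in the proofs of Theorems \ref{th:firstproduct} and \ref{thm:evolfamkonv}, then yields
\[
(\mathcal F(t/n))^n f(s) = \prod_{p=n-1}^{0} \widetilde V\bigl(2s - \tfrac{2pt}{n}, 2s - \tfrac{(2p+1)t}{n}\bigr)\,\widetilde U\bigl(2s - \tfrac{(2p+1)t}{n}, 2s - \tfrac{(2p+2)t}{n}\bigr)\, f(s-t).
\]

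Next I would verify the hypotheses of Chernoff's Theorem \ref{thm:chernoff}. Stability is exactly the assumption of the proposition. For consistency I would take $f$ in a dense invariant core of $\dom(\overline\calC)$ contained in $\calX_1 \cap \dom(A(\cdot)) \cap \dom(B(\cdot))$ and write
\begin{align*}
\tfrac{1}{h}\bigl[(\mathcal F(h)f)(s) - f(s)\bigr] &= \widetilde V(2s,2s-h)\,\tfrac{\widetilde U(2s-h,2s-2h)f(s-h) - f(s-h)}{h}\\
&\quad + \widetilde V(2s,2s-h)\,\tfrac{f(s-h)-f(s)}{h} + \tfrac{\widetilde V(2s,2s-h)f(s) - f(s)}{h}.
\end{align*}
Using the integral identity $\widetilde V(q+h,q)g - g = \int_q^{q+h} B(\sigma/2)\widetilde V(\sigma,q)g\,\dd\sigma$ (and the analogous one for $\widetilde U$), and observing that $B(\cdot/2)$ evaluated at first argument $2s$ equals $B(s)$ (and likewise $A(\cdot/2)$ at $2s$ equals $A(s)$), the three terms tend respectively to $A(s)f(s)$, $-f'(s)$, and $B(s)f(s)$, so that $h^{-1}(\mathcal F(h)f - f) \to \calC f$ in $\calX$.

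Chernoff's Theorem now gives $\calW(t)f = \lim_n (\mathcal F(t/n))^n f$ in $\calX$. Choosing $f \in \calX$ with $f(s) = x$ and applying \eqref{eq:recovery} at the point $t$ yields
\[
W(t,s)x = \lim_n \prod_{p=n-1}^{0} \widetilde V\bigl(2t - \tfrac{2p(t-s)}{n}, 2t - \tfrac{(2p+1)(t-s)}{n}\bigr)\,\widetilde U\bigl(2t - \tfrac{(2p+1)(t-s)}{n}, 2t - \tfrac{(2p+2)(t-s)}{n}\bigr) x.
\]
The substitution $q = n-1-p$ then converts the ``$2t - \cdot$'' arguments into ``$2s + \cdot$'' arguments and, consistent with the paper's product-ordering convention, rewrites $\prod_{p=n-1}^{0}$ as $\prod_{q=0}^{n-1}$, yielding the asserted formula. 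The principal obstacle is the bookkeeping in the consistency step: one must verify carefully that precisely $A(s)$ and $B(s)$ (not some rescaled version) emerge as the infinitesimal contributions, the factor $2$ in the first arguments of $\widetilde U$ and $\widetilde V$ being exactly compensated by the scaling $\cdot/2$ inside $A(\cdot/2)$ and $B(\cdot/2)$.
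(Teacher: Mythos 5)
Your argument is correct and is essentially the paper's own proof: your $\mathcal F(h)=\widetilde V(2\cdot,2\cdot-h)\,\widetilde U(2\cdot-h,2\cdot-2h)\,\calR(h)$ is exactly the product $\calS(h)\calT(h)$ of the two rescaled evolution semigroups that the paper obtains from the decomposition $\calC=\bigl(-\tfrac{1}{2}\tfrac{\dd}{\dd s}+A(\cdot)\bigr)+\bigl(-\tfrac{1}{2}\tfrac{\dd}{\dd s}+B(\cdot)\bigr)$, and the induction formula, the application of Chernoff's Theorem, the recovery via \eqref{eq:recovery} at the point $t$, and the reindexing $q=n-1-p$ all coincide with the paper's steps. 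Your consistency computation is in fact spelled out in slightly more detail than the paper's, which only refers to the ``straightforward calculation analogous to the one in the proof of Theorem \ref{th:firstproduct}.''
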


\begin{proof}
In the space $\mathcal{X}$, we write formally
\begin{equation*}
-\frac{\dd}{\dd s} + A(\cdot) + B(\cdot) = \bigl(-\frac{\dd}{2\dd s}
+ A(\cdot)\bigr) + \bigl(-\frac{\dd}{2\dd s} + B(\cdot)\bigr) =
\calA_1+ \calB_1.
\end{equation*}
Since the division by 2 in the formula means a rescaling of the
corresponding evolution semigroups $\calS$ and $\calT$, we obtain
the representation formulas
\begin{align*}
\calS(t) &= \widetilde V(2\cdot, 2\cdot - t)\calR(t/2)\\
\calT(t) &= \widetilde U(2\cdot, 2\cdot - t)\calR(t/2).
\end{align*}
By induction  one can see that
\begin{align*}
&\bigl(\calS(\tfrac{t}{n})\calT(\tfrac{t}{n})\bigr)^n f = \bigl(\widetilde V(2\cdot, 2\cdot-\tfrac{t}{n})\mathcal R(t/2n) \widetilde U(2\cdot, 2\cdot-\tfrac{t}{n})\mathcal R(t/2n)\bigr)^n f \\
&\quad= \prod_{p=n-1}^{0}\widetilde
V\bigl(2\cdot-\tfrac{2pt}{n},2\cdot - \tfrac{(2p+1)t}{n}\bigr)
\widetilde U\bigl(2\cdot - \tfrac{(2p+1)t}{n},2\cdot -
\tfrac{(2p+2)t}{n}\bigr)f(\cdot-t).
\end{align*}
Again, the closure $\overline\calC$ of the operator $\mathcal{C} =
-\frac{\dd}{\dd s} + A(\cdot) + B(\cdot)$ generates a strongly
continuous semigroup on $\calX$, hence $(\lambda-\calC)\dom(\calC)$
is dense. By this and by the stability assumption  Chernoff's
Theorem is applicable. We obtain that the evolution semigroup
generated by $\overline \calC$ is given by
\begin{equation*}
\calW(t)f = \lim_{n\to\infty} \prod_{p=n-1}^{0}\widetilde
V\bigl(2\cdot-\tfrac{2pt}{n},2\cdot - \tfrac{(2p+1)t}{n}\bigr)
\widetilde U\bigl(2\cdot - \tfrac{(2p+1)t}{n},2\cdot -
\tfrac{(2p+2)t}{n}\bigr)f(\cdot-t).
\end{equation*}
By passing to the evolution family we get the assertion:
\begin{align*}
&W(t,s)x \\
&= \lim_{n\to\infty} \prod_{p=n-1}^{0}\hskip-0.5em\widetilde V\bigl(2t-\tfrac{2p(t-s)}{n},2t - \tfrac{(2p+1)(t-s)}{n}\bigr) \widetilde U\bigl(2t - \tfrac{(2p+1)(t-s)}{n},2t - \tfrac{(2p+2)(t-s)}{n}\bigr)x \\
&= \lim_{n\to\infty} \prod_{p=0}^{n-1} \widetilde V \bigl (2s +
\tfrac{2(p+1)(t-s)}{n},2s + \tfrac{(2p+1)(t-s)}{n}\bigr) \widetilde
U \bigl( 2s+ \tfrac{(2p+1)(t-s)}{n}, 2s + \tfrac{2p(t-s)}{n}\bigr)x.
\end{align*}
\end{proof}

\begin{remark}
Note that, in contrast to the autonomous case, there is no general
connection between the evolution families $U$ and $\widetilde U$,
see Nickel \cite{nickel97}.
\end{remark}


\section{Generalizations and Remarks}\label{sec:6}

\subsection*{Higher order splitting methods}
We now show how the previous results generalize to higher order
splitting methods. The results are, using the stage set up
previously, direct applications of the corresponding autonomous
results applied to the evolution semigroups. We restrict ourselves
to the Strang and symmetrically weighted splitting, but other
splitting methods can be handled analogously. In any case only the
stability condition has to be adapted. This, however, is always
satisfied (and typically verifiable) if the operators involved are
contractions.

\begin{theorem}
Suppose that Assumptions \ref{ass:1} a) and c) are satisfied, and
that the stability condition holds in the following form:
\begin{enumerate}[b')]
\item \rule{0pt}{0pt}\hfill$\displaystyle\sup_{s\in\RR}\Bigl\|\prod_{p=n-1}^{0} \ee^{\frac{t}{2n}A(s-\frac{pt}{n})}\ee^{\frac{t}{n}B(s-\frac{pt}{n})} \ee^{\frac{t}{2n}A(s-\frac{pt}{n})}\Bigr\|\leq M\ee^{\omega t}$\hfill\rule{0pt}{0pt}
\end{enumerate}
in the case of the Strang splitting, or:
\begin{enumerate}[b'')]
\item \rule{0pt}{0pt}\hfill$\displaystyle\sup_{s\in \RR}\frac{1}{2^n}\Bigl\|\prod_{p=n-1}^{0} \Bigl( \ee^{\frac{t}{n}A(s-\frac{pt}{n})} \ee^{\frac{t}{n}B(s-\frac{pt}{n})} + \ee^{\frac{t}{n}B(s-\frac{pt}{n})}\ee^{\frac{t}{n}A(s-\frac{pt}{n})} \Bigr)\Bigr\|\leq M\ee^{\omega t}$\hfill\rule{0pt}{0pt}
\end{enumerate}
in the case of the symmetrically weighted splitting. Then we have
\begin{equation*}
W(t,s)x= \lim_{n\to\infty} \prod_{p=0}^{n-1}
\ee^{\frac{t-s}{2n}A(s+\frac{p(t-s)}{n})}\ee^{\frac{t-s}{n}B(s+\frac{p(t-s)}{n})}
\ee^{\frac{t-s}{2n}A(s+\frac{p(t-s)}{n})}x
\end{equation*}
for all $x\in X$ in case of the Strang splitting; and we have
\begin{align*}
&W(t,s)x\\
&= \lim_{n\to\infty} \frac{1}{2^n}\prod_{p=0}^{n-1} \Bigl(
\ee^{\frac{t-s}{n}A(s+\frac{p(t-s)}{n})}
\ee^{\frac{t-s}{n}B(s+\frac{p(t-s)}{n})} +
\ee^{\frac{t-s}{n}B(s+\frac{p(t-s)}{n})}\ee^{\frac{t-s}{n}A(s+\frac{p(t-s)}{n})}
\Bigr)x
\end{align*}
for all $x\in X$ in case of the symmetrically weighted splitting.
\end{theorem}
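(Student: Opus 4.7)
The proof will proceed exactly along the lines of Theorem \ref{th:firstproduct} and Theorem \ref{thm:evolfamkonv}, by recasting each splitting on the space $\calX=\C_0(\RR;X)$ as a Chernoff product for the generator $\overline\calC$ of the evolution semigroup $\calW$. The plan is first to observe, verbatim from the proof of Theorem \ref{th:firstproduct}, that Assumptions \ref{ass:1}.c) together with the uniform bounds from b') or b'') guarantee that $\ee^{tA(\cdot)}$ and $\ee^{tB(\cdot)}$ define strongly continuous multiplication semigroups on $\calX$, and that $(t,r)\mapsto \ee^{tA(r)}$ and $(t,r)\mapsto \ee^{tB(r)}$ are jointly strongly continuous; these are the only ingredients needed from parts a) and c) apart from well-posedness.

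Next I would introduce the finite difference families
\begin{equation*}
F^{\mathrm{St}}(h):=\calR(h)\,\ee^{\frac{h}{2}A(\cdot)}\ee^{hB(\cdot)}\ee^{\frac{h}{2}A(\cdot)},\quad
F^{\mathrm{sw}}(h):=\calR(h)\,\tfrac{1}{2}\bigl(\ee^{hA(\cdot)}\ee^{hB(\cdot)}+\ee^{hB(\cdot)}\ee^{hA(\cdot)}\bigr),
\end{equation*}
and check by induction, as was done in Theorem \ref{th:firstproduct}, that
\begin{align*}
\bigl(F^{\mathrm{St}}(\tfrac{t}{n})\bigr)^n f(\cdot)&=\prod_{p=n-1}^{0}\ee^{\frac{t}{2n}A(\cdot-\frac{pt}{n})}\ee^{\frac{t}{n}B(\cdot-\frac{pt}{n})}\ee^{\frac{t}{2n}A(\cdot-\frac{pt}{n})}\,\calR(t)f(\cdot),\\
\bigl(F^{\mathrm{sw}}(\tfrac{t}{n})\bigr)^n f(\cdot)&=\tfrac{1}{2^n}\prod_{p=n-1}^{0}\Bigl(\ee^{\frac{t}{n}A(\cdot-\frac{pt}{n})}\ee^{\frac{t}{n}B(\cdot-\frac{pt}{n})}+\ee^{\frac{t}{n}B(\cdot-\frac{pt}{n})}\ee^{\frac{t}{n}A(\cdot-\frac{pt}{n})}\Bigr)\calR(t)f(\cdot).
\end{align*}
Assumptions b') and b'') yield exactly the stability $\|F^{\mathrm{St}}(t/n)^n\|,\|F^{\mathrm{sw}}(t/n)^n\|\leq M\ee^{\omega t}$ required by Chernoff's Theorem \ref{thm:chernoff}.

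For consistency, I would evaluate the strong derivative at $h=0$ on the invariant core $\calD\subseteq\calX_1\cap\dom(A(\cdot))\cap\dom(B(\cdot))$ provided by Theorem \ref{WellChar}. A telescoping decomposition entirely analogous to the one in the proof of Theorem \ref{th:firstproduct} shows
\begin{equation*}
\lim_{h\downarrow 0}\tfrac{F^{\mathrm{St}}(h)f-f}{h}=\tfrac{1}{2}A(\cdot)f+B(\cdot)f+\tfrac{1}{2}A(\cdot)f-f'=\calC f,
\end{equation*}
and similarly $\tfrac{1}{2}(A(\cdot)B(\cdot)\text{-term}+B(\cdot)A(\cdot)\text{-term})$ yields $A(\cdot)f+B(\cdot)f-f'=\calC f$ for $F^{\mathrm{sw}}$, using that both summands are separately consistent with sequential splitting. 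Since by well-posedness $\overline\calC$ generates a strongly continuous semigroup on $\calX$, the range condition $(\lambda-\calC)\calD$ dense holds, so Chernoff applies and gives $\bigl(F(t/n)\bigr)^n f\to\calW(t)f$ in $\calX$.

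Finally, picking $f\in\calX$ with $f(s)=x$ and applying formula \eqref{eq:recovery} recovers $W(t,s)x$ from $\calW(t-s)f$; the translation $\calR(t)$ in the formulas above combines with the outer $\calR(s-t)$ of \eqref{eq:recovery} to produce the asserted forward-time products with arguments $s+\tfrac{p(t-s)}{n}$. The main conceptual point — and the only place where any real work is needed — is the consistency calculation for $F^{\mathrm{sw}}$; the hardest technical hypothesis is of course stability b')/b''), which, as the theorem's preamble notes, is automatic in the quasi-contractive case but genuinely nontrivial in general.
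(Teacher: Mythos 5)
Your argument is correct and coincides with the paper's own proof, which likewise reduces both splittings to Chernoff's theorem on $\calX$ applied, exactly as in Theorem \ref{th:firstproduct}, to the operator families $\calR(\tfrac{t}{n})\,\ee^{\frac{t}{2n}A(\cdot)}\ee^{\frac{t}{n}B(\cdot)}\ee^{\frac{t}{2n}A(\cdot)}$ and $\tfrac{1}{2}\calR(\tfrac{t}{n})\bigl(\ee^{\frac{t}{n}A(\cdot)}\ee^{\frac{t}{n}B(\cdot)}+\ee^{\frac{t}{n}B(\cdot)}\ee^{\frac{t}{n}A(\cdot)}\bigr)$, with stability supplied by b') resp.\ b'') and consistency checked on the core as you describe. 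The only blemish is a harmless one-step index shift in your intermediate product identities (the shifts should be $\cdot-\tfrac{pt}{n}$ for $p=1,\dots,n$ rather than $p=0,\dots,n-1$), which the $\sup_{s\in\RR}$ in the stability hypothesis and the recovery formula \eqref{eq:recovery} absorb, so the asserted limits are unaffected.
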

\begin{proof}
The statements follow immediately by the same reasonings as in the
proof of Theorem \ref{th:firstproduct}, but now considering the
expressions
\begin{equation*}
\Bigl(\mathcal R \bigl(\tfrac{t}{n}\bigr)\ee^{\frac{t}{2n}A(\cdot)}
\ee^{\frac{t}{n}B(\cdot)} \ee^{\frac{t}{2n}A(\cdot)} \Bigr)^n
\end{equation*}
for the Strang-splitting, and
\begin{equation*}
\frac{1}{2^n}\Bigl(\mathcal R
\bigl(\tfrac{t}{n}\bigr)\bigl(\ee^{\frac{t}{n}A(\cdot)}\ee^{\frac{t}{n}B(\cdot)}+\ee^{\frac{t}{n}B(\cdot)}
\ee^{\frac{t}{n}A(\cdot)} \bigr)\Bigr)^n,
\end{equation*}
for the weighted splitting, respectively.
\end{proof}

\begin{remark}
It can be shown by exactly the same arguments as in Csom\'{o}s and Nickel \cite[Lemma 2.3]{Csomos-Nickel} that the stability condition $(b')$ is equivalent to the stability condition in Assumption \ref{ass:1} b) for the sequential splitting.
\end{remark}

\vskip 5pt

\subsection*{Spatial approximations}
~~Continuing earlier investigations started in B\'{a}tkai, Csom\'{o}s and
Nickel \cite{Batkai-Csomos-Nickel}, we show that operator splitting
combined with spatial approximations is also convergent. We only
concentrate on the formula \eqref{eq:ev_product} for the sequential
splitting. Other methods can be considered analogously.

\begin{assumptions}\label{ass:proj_int}
Let $X_m$, $m\in\mathbb{N}$ be Banach spaces and take operators
\begin{equation*}
P_m:\ X\rightarrow X_m \qquad \mbox{and} \qquad J_m:\ X_m\rightarrow
X \nonumber
\end{equation*}
fulfilling the following properties:
\renewcommand{\labelenumi}{(\roman{enumi})}
\begin{enumerate}
\item $P_mJ_m=\Id_m$ for all $m\in\mathbb{N}$, where $\Id_m$ is the identity operator in $X_m$,
\item $\lim\limits_{m\rightarrow\infty}J_mP_mx=x$ for all $x\in X$,
\item $\|J_m\|\le K$ and $\|P_m\|\le K$ for all $m\in\mathbb{N}$ and a suitable absolute constant $K\geq 1$.
\end{enumerate}
\label{def:proj}
\end{assumptions}

The operators $P_m$ together with the spaces $X_m$ usually refer to
a kind of spatial discretization method (triangulation, Galerkin
approximation, Fourier coefficients, etc.), the spaces $X_m$ are in
most applications finite dimensional spaces, and the operators $J_m$
refer to the interpolation method describing how we associate
specific elements of the function space to the elements of the
approximating spaces (linear/polynomial/spline interpolation, etc.).

\begin{assumptions}\label{ass:stabcons}\hskip-0.1em
For each $m\in \NN$ and  $r\in \RR$ let the operators
$A_m(r)$ and $B_m(r)$ be generators of
strongly continuous semigroups $\ee^{t A_m(r)}$ and $\ee^{t
B_m(r)}$, respectively. Assume furthermore that
\begin{enumerate}[a)]
\item (\bdef{Stability}) there exist constants $M\ge 1$ and $\omega\in\mathbb{R}$ such that
\begin{equation*}
 \|\ee^{hA(r)}\|, \, \|\ee^{hA_m(r)}\|,\, \|\ee^{hB(r)}\|,\,\|\ee^{hB_m(r)}\| \le M\ee^{\omega h}, \text{ for all } h>0 \text{ and } r\in\RR, \text{ that }
\end{equation*}
\begin{equation*} \label{stab_approx}
\sup_{s\in\RR}\Bigl\| \prod_{p=n-1}^{0}
\bigl(\ee^{\frac{t}{n}B_m(s-\frac{pt}{n})}
\ee^{\frac{t}{n}A_m(s-\frac{pt}{n})}\bigr) \Bigr\| \leq M
\ee^{\omega t}, \text{ and that }
\end{equation*}
\item (\bdef{Consistency}) the identities
$\lim\limits_{m\to\infty}J_mA_m(\cdot)P_m f=A(\cdot)f$ for all $f\in \dom(A(\cdot))$, and
 $\lim\limits_{m\to\infty}J_mB_m(\cdot)P_m f=B(\cdot)f$ for all $f\in \dom(B(\cdot))$ hold.
\end{enumerate}
\end{assumptions}

As in B\'{a}tkai, Csom\'{o}s and Nickel \cite{Batkai-Csomos-Nickel}, stability
and consistency implies convergence.

\begin{theorem}\label{th:product_approx}
Suppose that Assumption \ref{ass:stabcons} is satisfied. Then one
has the convergence
\begin{equation*}
W(t,s)x= \lim_{m\to\infty}\lim_{n\to\infty}J_m \prod_{p=0}^{n-1}
\bigl(\ee^{\frac{t-s}{n}B_m(s+\frac{p(t-s)}{n})}
\ee^{\frac{t-s}{n}A_m(s+\frac{s+p(t-s)}{n})}\bigr) P_mx
\end{equation*}
for all $x\in X$.
\end{theorem}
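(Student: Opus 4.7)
The plan is to iterate the Chernoff-based evolution-semigroup argument of Theorem \ref{th:firstproduct} twice: once to pass $n\to\infty$ inside each fixed approximation space $X_m$, and once to pass $m\to\infty$ via a Trotter--Kato-type approximation on the level of evolution semigroups. Throughout I would lift $P_m, J_m$ pointwise to operators $\tilde P_m : \calX \to \calX_m$ and $\tilde J_m : \calX_m \to \calX$ by $(\tilde P_m f)(s) := P_m f(s)$ and $(\tilde J_m g)(s) := J_m g(s)$, where $\calX := \C_0(\RR;X)$ and $\calX_m := \C_0(\RR;X_m)$. These lifts inherit the three properties of Assumption \ref{ass:proj_int}: boundedness and $\tilde P_m \tilde J_m = \Id_m$ are immediate, while $\tilde J_m \tilde P_m f \to f$ in $\calX$ follows from Assumption \ref{ass:proj_int} (ii), the uniform bound on $J_m P_m$, and the vanishing-at-infinity property.

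For the inner limit, fix $m$ and mimic the proof of Theorem \ref{th:firstproduct} on $X_m$. The stability in Assumption \ref{ass:stabcons}.a) is precisely the stability hypothesis required there, and -- assuming the mild regularity in $r$ needed to realise $\ee^{tA_m(r)}$ and $\ee^{tB_m(r)}$ as strongly continuous multiplication semigroups on $\calX_m$ -- Chernoff's Theorem yields an evolution family $W_m$ on $X_m$ with associated evolution semigroup $\calW_m$ on $\calX_m$ generated by the closure of $\calC_m := -\frac{\dd}{\dd s} + A_m(\cdot) + B_m(\cdot)$, satisfying
\begin{equation*}
W_m(t,s)y = \lim_{n\to\infty} \prod_{p=0}^{n-1} \bigl(\ee^{\frac{t-s}{n}B_m(s+\frac{p(t-s)}{n})} \ee^{\frac{t-s}{n}A_m(s+\frac{p(t-s)}{n})}\bigr) y
\end{equation*}
for $y\in X_m$, locally uniformly in $s\le t$, together with the uniform bound $\|\calW_m(t)\|, \|W_m(t,s)\| \le M \ee^{\omega(t-s)}$.

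For the outer limit, I would invoke a discrete Trotter--Kato-type approximation theorem of the form used in B\'atkai--Csom\'os--Nickel \cite{Batkai-Csomos-Nickel}: the uniform stability of $(\calW_m)_m$ together with consistency $\tilde J_m \calC_m \tilde P_m f \to \calC f$ on a core $\calD$ of $\calC = -\frac{\dd}{\dd s} + A(\cdot) + B(\cdot)$ yields $\tilde J_m \calW_m(t) \tilde P_m f \to \calW(t) f$ in $\calX$, uniformly for $t$ on compacts. Consistency on $\calD$ is checked by splitting $\calC_m$ into three summands: the $-\frac{\dd}{\dd s}$-term transforms under $\tilde J_m, \tilde P_m$ as a pointwise composition with $J_m P_m$ which converges to the identity on $\calD$, while the $A_m(\cdot)$- and $B_m(\cdot)$-terms converge by Assumption \ref{ass:stabcons}.b) applied pointwise in $s$, combined with uniform bounds. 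Finally, given $x\in X$, choose $f\in\calX$ with $f(s)=x$; applying the recovery formula \eqref{eq:recovery} to both $\calW_m$ and $\calW$ gives $J_m W_m(t,s) P_m x \to W(t,s) x$, which combined with the inner limit produces the iterated limit of the statement.

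The main obstacle is verifying the consistency condition at the level of cores in $\calX$: one needs a core $\calD$ of $\calC$ on which $\tilde J_m \calC_m \tilde P_m f$ converges in the norm of $\calX$ (not merely pointwise in $s$), and which is simultaneously invariant under the evolution semigroup $\calW$. A natural candidate is the space of compactly supported $\C^1$-functions with values in a joint core of $\dom(A(s))\cap\dom(B(s))$; on such functions the pointwise consistency of Assumption \ref{ass:stabcons}.b) upgrades to uniform convergence by a dominated-convergence argument on the compact support. A secondary technical point is the joint strong continuity in $(t,r)$ of $\ee^{tA_m(r)}$ and $\ee^{tB_m(r)}$ required for the $\calX_m$-level semigroup argument, but this is handled by the same Trotter--Kato-in-parameter reasoning already invoked in the proof of Theorem \ref{th:firstproduct}.
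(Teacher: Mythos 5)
Your proposal is correct and follows essentially the same route as the paper: both lift $P_m$ and $J_m$ pointwise to operators $\calP_m,\calJ_m$ between $\calX=\C_0(\RR;X)$ and $\calX_m=\C_0(\RR;X_m)$, verify Assumption \ref{ass:proj_int} for these lifts (the only nontrivial point being the uniform convergence $\calJ_m\calP_m f\to f$, which the paper proves by a compactness/partition argument and you correctly sketch from the same ingredients), and then run the Chernoff-with-spatial-approximation machinery at the level of the evolution semigroups before recovering the evolution families via \eqref{eq:recovery}. The only real difference is one of packaging: the paper invokes the modified Chernoff theorem of B\'atkai--Csom\'os--Nickel as a single black box and is terse about consistency, whereas you re-derive its content as an inner Chernoff limit in $n$ followed by an outer Trotter--Kato limit in $m$ and spell out the consistency/core issue explicitly.
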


\begin{proof}
We will apply B\'{a}tkai, Csom\'{o}s and Nickel \cite[Theorem
3.6]{Batkai-Csomos-Nickel}, the modified Chernoff's Theorem  directly.
To this end, define the spaces
\begin{equation*}
\calX_m=\C_0(\RR;X_m), \quad \calX:=\C_0(\RR;X)
\end{equation*}
and the projection operators
\begin{equation*}
\calP_m=\Id\otimes P_m: \calX\to \calX_m, \quad
(\calP_mf)(t):=P_mf(t),
\end{equation*}
and interpolation operators
\begin{equation*}
\calJ_m=\Id\otimes J_m: \calX_m\to \calX, \quad
(\calJ_mf_m)(t):=J_mf_m(t).
\end{equation*}
We have to check that these operators satisfy the conditions in
Assumption \ref{ass:proj_int}. Conditions (i) and (iii) are
immediate from the definitions. The $(\calJ_m\calP_m f)(s)\to f(s)$
is true pointwise. We have to show that the convergence holds in fact uniformly
in $s\in \RR$. Take $\varepsilon >0$. Let $f\in \calX$ and
$[a,b]\subset \RR$ such that $\|f(s)\| \leq
\tfrac{\varepsilon}{2K^2}$ for all $s\in \RR\setminus [a,b]$. Then
\begin{equation*}
\|J_mP_mf(s)-f(s)\|\leq \varepsilon
\end{equation*}
for $s\in \RR\setminus [a,b]$.
Since $f$ is uniformly continuous, there is $\delta>0$ such that for
all $s,t\in [a,b]$, $|s-t|<\delta$, we have $\|f(s)-f(t)\|\leq
\tfrac{\varepsilon}{K^2+2}$. Take a partition $a=s_0<s_1<\ldots
<s_n=b$ such that $|s_{i+1}-s_i|<\delta$. Then by definition, there
is $M>0$ such that for all $m\geq M$
\begin{equation*}
\|J_mP_mf(s_i)-f(s_i)\|\leq \tfrac{\varepsilon}{K^2+2}.
\end{equation*}
Since for $s\in [a,b]$ there is $j$ such that $s\in
[s_{j},s_{j+1}]$, we get for $m\geq M$,
\begin{align*}
&\|J_mP_mf(s)-f(s)\| \\
&\quad\quad\leq \|J_mP_m(f(s)-f(s_j))\| + \|J_mP_m f(s_j)-f(s_j)\| +
\|f(s_j)-f(s)\| \leq \varepsilon.
\end{align*}
Hence, $\|\calJ_m\calP_mf-f\|_\infty\leq \varepsilon$ holds for all
$m\geq M$.

\medskip
The validity of Assumption \ref{ass:stabcons}  implies that the
corresponding multiplication semigroups satisfy the necessary
stability and consistency conditions.
\end{proof}


\subsection*{Positivity preservation}
~~As it was pointed out by W. Arendt (Ulm), the product and splitting formulas can be used to show positivity properties of evolution families. On the terminology and properties of positive operator semigroups see Arendt et al. \cite{Arendt_etal} or Engel and Nagel \cite[Section VI.1]{Engel-Nagel}.

\begin{theorem} Assume that $X$ is a Banach lattice.
\begin{enumerate}
\item Let the conditions of  Assumptions \ref{ass:1} are satisfied and that all the operators $A(r)$ and $B(r)$ generate positive semigroups. Then the evolution family $W$ given by \eqref{eq:ev_product} in Theorem \ref{th:firstproduct} is positive.
\item Let the conditions of Assumptions \ref{ass:2} are satisfied and that all the evolution families $U$ and $V$  are positive. Then the evolution family $W$ given by Theorem \ref{thm:evolfamkonv} is positive.
\end{enumerate}
\end{theorem}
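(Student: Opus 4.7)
The plan is to exploit the convergence results, Theorem \ref{th:firstproduct} and Theorem \ref{thm:evolfamkonv}, together with the elementary fact that in a Banach lattice the positive cone $X_+$ is closed in the norm topology. In particular, the set of positive bounded linear operators
\begin{equation*}
\LLL(X)_+ := \bigl\{ S\in\LLL(X) : Sx \in X_+ \text{ for all } x\in X_+\bigr\}
\end{equation*}
is closed under composition and closed in the strong operator topology. These two permanence properties are the only ingredients we need.

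For assertion (1) I would first observe that by assumption each of the semigroups $\ee^{\cdot A(r)}$ and $\ee^{\cdot B(r)}$ consists of positive operators, so that for every $h>0$ and every $r\in\RR$ the operator $\ee^{h B(r)}\ee^{h A(r)}$ is positive. Since $\LLL(X)_+$ is closed under composition, the finite products
\begin{equation*}
\prod_{p=0}^{n-1} \ee^{\frac{t-s}{n}B(s+\frac{p(t-s)}{n})}\ee^{\frac{t-s}{n}A(s+\frac{p(t-s)}{n})}
\end{equation*}
belong to $\LLL(X)_+$ for every $n\in \NN$. By Theorem \ref{th:firstproduct} these operators converge strongly to $W(t,s)$ as $n\to\infty$, and since $X_+$ is closed we conclude that $W(t,s)x\in X_+$ for every $x\in X_+$, i.e., $W(t,s)$ is positive.

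Assertion (2) is proved in exactly the same way using Theorem \ref{thm:evolfamkonv}: the operators $V(s+\tfrac{(p+1)(t-s)}{n},s+\tfrac{p(t-s)}{n})$ and $U(s+\tfrac{(p+1)(t-s)}{n},s+\tfrac{p(t-s)}{n})$ are positive by hypothesis, their composition is positive, so the finite products appearing in the representation formula of Theorem \ref{thm:evolfamkonv} are positive, and another passage to the strong limit yields positivity of $W(t,s)$. There is no real obstacle here beyond invoking the two permanence properties; the splitting formulas do all the work, which is precisely the point emphasized in Arendt's observation.
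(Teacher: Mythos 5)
Your argument is correct and rests on the same two permanence facts the paper invokes (positivity is preserved under composition and under strong limits, since $X_+$ is norm-closed); the paper's own proof is a one-line appeal to the positivity of the multiplication, shift, and evolution semigroups on $\calX$, i.e., it runs the same limiting argument one level up on the function space and then evaluates pointwise. Your version, working directly with the product formulas on $X$, is if anything slightly more streamlined, but it is not a genuinely different route.
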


The proof is an immediate consequence of the fact that the corresponding multiplication, shift, and evolution semigroups are positive. It would be an important and interesting question whether similar results hold for shape preserving semigroups in the sense of Kov\'{a}cs \cite[Definition 20]{Kovacs}.


\section{A non-autonomous parabolic equation}\label{sec:7}

In order to demonstrate the range of our results, we will consider
    an important and much studied parabolic equation
\begin{equation}\label{eq:schroedinger}
\partial_t u(x,t) = \Delta u(x,t) + V(x,t)u(x,t)
\end{equation}
in $\RR^d$ with appropriate initial conditions, where $V$ is a
smooth and bounded function. Rewritten abstractly this takes the
form
\begin{equation}\label{eq:abstsch}
\tfrac {\dd}{\dd t}u(t)=\Delta u(t)+V(t)u(t)
\end{equation}
with  $u:\RR_+\to \Ell^2(\RR^d)=:X$ a vector valued function. Hence
a straightforward choice for the splitting for the evolution
semigroups is
\begin{equation*}
\mathcal{A}:=-\tfrac{\dd}{\dd s}+\Delta,\quad \mathcal{B}:=\mbox{the
pointwise multiplication by $V(t)$}.
\end{equation*}
These operators (with appropriate domain) generate the following
semigroups on the Banach space $\calX:=\BUC(\RR;\Ell^2(\RR^d))$
\begin{equation*}
[\mathcal{T}(t)f](s):=\ee^{t\Delta} f(s-t)\quad\mbox{and}\quad
[\mathcal{S}(t)f](s):=\ee^{tV(s)}f(s).
\end{equation*}
We shall assume that $V\in \BUC(\RR;\Ell^\infty(\RR^d))$, so
$\mathcal{B}$ is bounded. The domain of the generator of
$\mathcal{S}$ can be given explicitly, see Nagel, Nickel and Romanelli
\cite[Prop.~4.3]{NaNiRo}):
\begin{equation*}
\dom(\mathcal{A})=\bigl\{f\in \BUC(\RR;X)\cap
\BUC^1(\RR;X_{-1}):-f'+\Delta_{-1}f\in \BUC(\RR;X)\bigr\},
\end{equation*}
here $\Delta_{-1}$ with domain $\Ell^2(\RR^d)$ is the generator of
the extrapolated semigroup, see Engel and Nagel \cite[Section
II.5.a]{Engel-Nagel} for the corresponding definitions.

As a corollary of Theorem \ref{th:firstproduct} we obtain the
convergence of the sequential (and also the Strang) splitting
procedures.
\begin{proposition}\label{prop:strangconv}
Suppose that the potential $V\in \BUC(\RR;\Ell^\infty(\RR^d))$. Let
$\calW$ denote the semigroup generated by $\calA+\calB$ on
$\BUC(\RR;\Ell^{2}(\RR^d))$. For every function $f\in
\BUC(\RR;\Ell^2(\RR^d))$ we have the product formula
\begin{equation*}
\lim_{n\to \infty}
\bigl(\mathcal{S}(\tfrac{t}{n})\mathcal{T}(\tfrac{t}{n})\bigr)^n
f=\mathcal{W}(t)f,
\end{equation*}
where the convergence is uniform on compact time-intervals. Let
$(W(t,s))_{t\geq s}$ denote the evolution system solving
\eqref{eq:abstsch} on $\Ell^2(\RR^d)$. Then for every $u_0\in
\Ell^{2}(\RR^d)$ we have
\begin{equation*}
\lim_{n\to \infty}\Bigl\|W(t,s)u_0-
\prod_{p=0}^{n-1}
\ee^{\frac{t-s}{n}V(s+\frac{pt}{n})}\ee^{\frac{t-s}{n}\Delta
}u_0\Bigr\|=0,
\end{equation*}
locally uniformly for $s\leq t$.
\end{proposition}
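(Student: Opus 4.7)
The plan is to invoke Theorem~\ref{th:firstproduct} (together with the remark following it that permits working on $\BUC(\RR;X)$ in place of $\C_0(\RR;X)$) for the concrete choice $X=\Ell^2(\RR^d)$, $A(r)\equiv\Delta$, and $B(r)=$ multiplication by $V(r,\cdot)\in \Ell^\infty(\RR^d)$. Then both conclusions of the proposition follow: the evolution-semigroup limit is literally the content of Theorem~\ref{th:firstproduct}, and the evolution-family limit is obtained by evaluating at $s$ and applying the recovery formula~\eqref{eq:recovery}, taking any $f\in\BUC(\RR;\Ell^2(\RR^d))$ with $f(s)=u_0$. The only work is therefore to verify Assumption~\ref{ass:1} and the uniform strong continuity needed for the $\BUC$-version.

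For well-posedness of the problem associated with $\Delta+V(t)$, I would note that $\Delta$ generates an analytic contraction semigroup on $\Ell^2(\RR^d)$ with the $t$-independent dense domain $\Sob^2(\RR^d)$, while $V(t)$ is a uniformly bounded multiplication operator on $\Ell^2$ depending uniformly continuously on $t$ by the hypothesis $V\in\BUC(\RR;\Ell^\infty(\RR^d))$. This places us in the parabolic framework of Assumption~\ref{asu-para}, so the evolution family $W$ is well defined, and $\calW$ is generated on $\BUC(\RR;\Ell^2(\RR^d))$ by the closure of $\calA+\calB$ (indeed $\calB$ is bounded, so this is a bounded perturbation of the generator $\calA$ of $\calT$). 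Continuity of the resolvent maps in Assumption~\ref{ass:1}\,c) is trivial for $R(\lambda,\Delta)$ (it is constant in $t$), while for $R(\lambda,V(t))$ it follows from continuity of $t\mapsto(\lambda-V(t,\cdot))^{-1}$ in $\Ell^\infty$-norm, obtained from $V\in\BUC(\RR;\Ell^\infty)$.

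Stability is immediate: $\|\ee^{h\Delta}\|\le 1$ on $\Ell^2(\RR^d)$, while for every $r$ the multiplication semigroup satisfies $\|\ee^{hV(r)}\|\le \ee^{h\|V(r,\cdot)\|_\infty}\le \ee^{h\omega}$ with $\omega:=\sup_{r\in\RR}\|V(r,\cdot)\|_{\Ell^\infty(\RR^d)}<\infty$. Hence every finite product $\bigl\|\prod_{p=n}^{1}\ee^{\frac{t}{n}V(s-pt/n)}\ee^{\frac{t}{n}\Delta}\bigr\|\le \ee^{t\omega}$ uniformly in $s$ and $n$, giving Assumption~\ref{ass:1}\,b) with $M=1$. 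Moreover $\ee^{h\Delta}$ does not depend on $r$, and $r\mapsto \ee^{hV(r)}$ is even norm-continuous in $r\in\RR$ uniformly on compact $h$-intervals (since $r\mapsto V(r,\cdot)$ is uniformly continuous in $\Ell^\infty$ and the exponential map is locally Lipschitz); in particular the uniform strong continuity required by the remark following Theorem~\ref{th:firstproduct} is satisfied, so the product formula holds on $\calX=\BUC(\RR;\Ell^2(\RR^d))$. Applying Theorem~\ref{th:firstproduct} gives the first assertion; reading the pointwise value at $s$ (with $f(s)=u_0$) and using \eqref{eq:recovery} then yields the second, locally uniformly in $s\le t$.

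The main obstacle, if any, is of a purely bookkeeping nature: checking that the $\BUC$-variant is indeed applicable (the substantive mathematical input is mild, since $\Delta$ is contractive, $V$ is bounded, and all time dependence is uniformly continuous), and transferring the uniform convergence of $(\calS(t/n)\calT(t/n))^n f$ in $\BUC(\RR;\Ell^2)$ into the stated pointwise-in-$s$ convergence on $\Ell^2(\RR^d)$ via the evolution-family recovery formula.
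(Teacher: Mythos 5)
Your proposal is correct and follows essentially the same route as the paper: verify the stability condition of Assumption~\ref{ass:1} using the contractivity of $\ee^{t\Delta}$ and the uniform boundedness of $V$, then invoke Chernoff's Theorem for the semigroup statement and Theorem~\ref{th:firstproduct} (with the recovery formula) for the evolution family. You simply spell out in more detail the well-posedness and the $\BUC$-versus-$\C_0$ bookkeeping that the paper's proof leaves implicit.
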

\begin{proof}
For the first assertion we only have to verify the stability
Assumption \ref{ass:1} b), and then the assertion follows directly
from Chernoff's Theorem \ref{thm:chernoff}. Stability follows,
because the semigroup $\ee^{t\Delta}$ is contractive and $V(s)$ is uniformly
bounded. The second assertion is a direct consequence of Theorem
\ref{th:firstproduct}.
\end{proof}
Next we study convergence rates for the sequential splitting
procedure applied to the above equation \eqref{eq:schroedinger}. To
this end we apply Theorem \ref{Thm:Jahnke-Lubich} to the
corresponding evolution semigroups.

\begin{theorem}\label{thm:SchStrangsgrp}
Suppose that $V\in \BUC(\RR;W^{2,\infty}(\RR^d))\cap
\BUC^1(\RR;\Ell^{\infty}(\RR^d))$. If $f\in
\BUC^1(\RR;\Sob^2(\RR^d))$, we obtain
\begin{equation*}
\bigl\|\bigl(\mathcal{S}(\tfrac{t}{n})\mathcal{T}(\tfrac{t}{n})\bigr)^n-\mathcal{W}(t)f\bigr\|\leq
\frac{Ct^2}{n}\|f\|_{\BUC^1(\RR;\Sob^2(\RR^d))}.
\end{equation*}
\end{theorem}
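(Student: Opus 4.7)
The plan is to apply the Jahnke--Lubich/Hansen--Ostermann Theorem \ref{Thm:Jahnke-Lubich} at the level of the evolution semigroups, taking $A := \mathcal{A} = -\frac{\dd}{\dd s} + \Delta$ and $B := \mathcal{B}$ (pointwise multiplication by $V$) on $\calX := \BUC(\RR; \Ell^2(\RR^d))$. The sum $\mathcal{A}+\mathcal{B}$ generates $\mathcal{W}$ and the sequential splitting $\bigl(\mathcal{S}(t/n)\mathcal{T}(t/n)\bigr)^n$ is exactly of the form $\bigl(\ee^{(t/n)\mathcal{B}}\ee^{(t/n)\mathcal{A}}\bigr)^n$, so the theorem's conclusion is precisely the estimate we want, provided its hypotheses are checked. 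Structurally, $\mathcal{A}$ generates the contraction semigroup $[\mathcal{T}(t)f](s) = \ee^{t\Delta}f(s-t)$ on $\calX$ (composition of the isometric right-translation with the $\Ell^2$-contractive heat semigroup), while $\mathcal{B}$ is bounded on $\calX$ because $V \in \BUC(\RR;\Ell^\infty(\RR^d))$.

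For the invariant dense subspace I take $D := \BUC^1(\RR;\Sob^2(\RR^d))$; it is dense in $\calX$, is contained in $\dom(\mathcal{A})$, and is invariant under $\mathcal{W}(t)$, since the heat semigroup preserves $\Sob^2$, multiplication by $V$ preserves $\Sob^2$-regularity thanks to $V\in\BUC(\RR;W^{2,\infty})$, and $\BUC^1$-in-$s$ regularity is preserved thanks to $V\in\BUC^1(\RR;\Ell^\infty)$. A direct computation on $D$, after cancellation of the $V\partial_s f$ and $V\Delta f$ terms, yields
\begin{equation*}
[\mathcal{A}, \mathcal{B}]f = -(\partial_s V)f + (\Delta V)f + 2(\nabla_x V)\cdot\nabla_x f,
\end{equation*}
and hence $\|[\mathcal{A}, \mathcal{B}]f\|_{\calX}\leq C_V\bigl(\|f\|_{\BUC(\RR;\Ell^2)} + \|\nabla_x f\|_{\BUC(\RR;\Ell^2)}\bigr)$.

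The main obstacle is the last step: I must dominate this upper bound by $c\|(-\mathcal{A})^\alpha f\|_{\calX}$ for some $\alpha \in (0,1]$, and then in turn by $C\|f\|_{\BUC^1(\RR;\Sob^2)}$. The natural choice is $\alpha = \tfrac{1}{2}$, motivated by the identity $\|(-\Delta)^{1/2}g\|_{\Ell^2} = \|\nabla g\|_{\Ell^2}$. The subtlety is that $\mathcal{A}$ is \emph{not} sectorial — its translation part generates only a unitary group — so the sectorial machinery for fractional powers and moment inequalities is not directly available. However, the two constituent generators $-\frac{\dd}{\dd s}$ and $-\Delta$ commute on $\calX$, which allows one to identify $(-\mathcal{A})^{1/2}$ through the joint functional calculus of commuting generators and obtain, on $D$, both $\|\nabla_x f\|_{\calX} \leq C\|(-\mathcal{A})^{1/2}f\|_{\calX}$ and a moment-type bound $\|(-\mathcal{A})^{1/2}f\|_{\calX}\leq C\|f\|_{\calX}^{1/2}\|\mathcal{A}f\|_{\calX}^{1/2}\leq C'\|f\|_{\BUC^1(\RR;\Sob^2)}$, where the second factor is controlled using $\|\mathcal{A}f\|_{\calX}\leq \|f'\|_{\BUC(\RR;\Ell^2)} + \|\Delta f\|_{\BUC(\RR;\Ell^2)}\leq \|f\|_{\BUC^1(\RR;\Sob^2)}$. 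With these two fractional estimates in hand, Theorem \ref{Thm:Jahnke-Lubich} applied with $\alpha = \tfrac{1}{2}$ yields the announced first-order rate $\frac{Ct^2}{n}\|f\|_{\BUC^1(\RR;\Sob^2)}$.
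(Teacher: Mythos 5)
Your overall architecture coincides with the paper's: apply Theorem \ref{Thm:Jahnke-Lubich} to $\calA=-\frac{\dd}{\dd s}+\Delta$ and the bounded multiplication $\calB$ on $\calX=\BUC(\RR;\Ell^2(\RR^d))$, compute $[\calA,\calB]f=-V'f+(\Delta V)f+2\nabla V\cdot\nabla f$, and reduce everything to controlling $\|\nabla_x f\|$. Your step bounding $\|(-\calA)^{1/2}f\|$ from above is also sound: the moment inequality $\|(-\calA)^{1/2}f\|\le C\|f\|^{1/2}\|\calA f\|^{1/2}$ holds for any generator of a bounded $C_0$-semigroup and gives $\|(-\calA)^{1/2}f\|\le C\|f\|_{\BUC^1(\RR;\Sob^2)}$; this parallels the role of the second inclusion in the paper's Proposition \ref{prop:favi}.

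The gap is the opposite inequality, $\|\nabla_x f\|_{\calX}\le C\|(-\calA)^{1/2}f\|_{\calX}$, which you attribute to a ``joint functional calculus of commuting generators.'' Commutativity of $-\frac{\dd}{\dd s}$ and $\Delta$ does not yield any comparison between $(-\Delta)^{1/2}$ and $\bigl(\frac{\dd}{\dd s}-\Delta\bigr)^{1/2}$: the first summand generates only an isometric group, so there is no order or sectoriality structure to calculate with. Concretely, the operator you need to bound is the endpoint ``parabolic Riesz transform'' $(-\Delta)^{1/2}(\lambda-\calA)^{-1/2}$, whose Balakrishnan representation has operator-valued kernel of norm $\sim t^{-1/2}\cdot\|(-\Delta)^{1/2}\ee^{t\Delta}\|\sim t^{-1}$ near $t=0$, hence is not absolutely convergent; the fiberwise symbol $|\xi|(i\tau+|\xi|^2)^{-1/2}$ is uniformly bounded as a scalar multiplier, but uniform fiberwise bounds do not give an operator-valued multiplier theorem on the sup-norm space $\BUC(\RR;\Ell^2)$ (this is exactly the kind of borderline estimate that holds on $\Ell^2(\RR^{1+d})$ by Plancherel but is problematic, and possibly false, on $\C$- or $\Ell^\infty$-type spaces). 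The paper avoids this endpoint entirely: it proves Proposition \ref{prop:favi}, identifying the Favard spaces $\calX_\alpha$ of the evolution semigroup with $\BUC(\RR;\dom((-\Delta)^\alpha))\cap\BUC^\alpha(\RR;\Ell^2)$ up to the usual $\alpha>\beta$ losses, and uses these two-sided embeddings (together with $F_\alpha\hookrightarrow\dom((-\calA)^\beta)\hookrightarrow F_\beta$) to verify the commutator condition \eqref{eq:commutator_cond} for some $\alpha\ge\tfrac12$ on the concrete class $f\in\BUC^1(\RR;\Sob^2(\RR^d))$, where the extra temporal H\"older regularity is available. To repair your argument you would either have to prove the endpoint bound above (not merely assert it), or replace it by the paper's non-endpoint Favard-space route.
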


Before we prove the theorem, let us first reformulate this product formula
for the solutions of the non-autonomous problem.
\begin{corollary}\label{thm:Strangevo}
Consider the non-autonomous parabolic equation
\begin{equation*}
\begin{cases}
\partial_t u(x,t) = \Delta u(x,t) + V(x,t)u(x,t),&t\geq s,\:x\in \RR^d,\\
u(x,s)=u_0(x),&x\in \RR^d.
\end{cases}
\end{equation*}%
Suppose that $V\in \BUC(\RR;W^{2,\infty}(\RR^d))\cap
\BUC^1(\RR;\Ell^{\infty}(\RR^d))$. If $u_0\in \Sob^2(\RR^d)$ then
for the evolution family $(W(t,s))_{t\geq s}$ solving the above
problem we have
\begin{equation*}
\Bigl\|W(t,s)u_0 - \prod_{p=0}^{n-1}
\ee^{\frac{t-s}{n}V(s+\frac{pt}{n})}\ee^{\frac{t-s}{n}\Delta }
u_0\Bigr\|\leq \frac{C(t-s)^2}{n}\|u_0\|_{H^2}.
\end{equation*}
\end{corollary}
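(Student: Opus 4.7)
The plan is to lift the semigroup-level estimate of Theorem \ref{thm:SchStrangsgrp} to the evolution family via the recovery formula \eqref{eq:recovery}. I would first embed the datum $u_0\in \Sob^2(\RR^d)$ into $\calX$ as the constant function $f\equiv u_0$: since its time derivative vanishes, $f\in \BUC^1(\RR;\Sob^2(\RR^d))$ with $\|f\|_{\BUC^1(\RR;\Sob^2)}=\|u_0\|_{\Sob^2}$, and $f(s)=u_0$.

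Setting $\tau:=(t-s)/n$, I would then apply Theorem \ref{thm:SchStrangsgrp} with $t-s$ in place of $t$ to this $f$, obtaining
\[
\bigl\|(\calS(\tau)\calT(\tau))^n f - \calW(t-s)f\bigr\|_\infty \leq \frac{C(t-s)^2}{n}\|u_0\|_{\Sob^2}.
\]
Using $\|\cdot\|_{\Ell^2}\leq\|\cdot\|_\infty$ to pass to a point-evaluation at $s'=t$, and observing via \eqref{eq:recovery} that $(\calW(t-s)f)(t)=W(t,t-(t-s))f(s)=W(t,s)u_0$, this would yield
\[
\bigl\|((\calS(\tau)\calT(\tau))^n f)(t)-W(t,s)u_0\bigr\|_{\Ell^2}\leq \frac{C(t-s)^2}{n}\|u_0\|_{\Sob^2}.
\]
A direct induction using the formulas for $\calS$ and $\calT$ from Section \ref{sec:7}, together with $f\equiv u_0$, identifies
\[
((\calS(\tau)\calT(\tau))^n f)(t)= e^{\tau V(t)}e^{\tau\Delta}e^{\tau V(t-\tau)}e^{\tau\Delta}\cdots e^{\tau V(s+\tau)}e^{\tau\Delta}\, u_0.
\]

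It remains to compare this with the product in the corollary, whose $V$-arguments run over $s+p\tau$ for $p=0,\dots,n-1$, while the semigroup above produces $p=1,\dots,n$. Since $V\in \BUC^1(\RR;\Ell^{\infty}(\RR^d))$, the mean value theorem gives the single-factor estimate
\[
\bigl\|e^{\tau V(r)}-e^{\tau V(r-\tau)}\bigr\|_{\Ell^2\to\Ell^2}\leq e^{\tau\|V\|_\infty}\,\tau\cdot \tau\|\partial_t V\|_\infty = C\tau^2,
\]
with $C$ depending only on $\|V\|_{\BUC^1(\RR;\Ell^\infty)}$. A standard telescoping on the $n$-fold product, using that each $e^{\tau\Delta}$ is a contraction on $\Ell^2$ and each $e^{\tau V(r)}$ is bounded by $e^{\tau\|V\|_\infty}$, would then bound the difference of the two products applied to $u_0$ by $n\cdot C\tau^2\,\|u_0\|_{\Ell^2}= C(t-s)^2/n\cdot\|u_0\|_{\Ell^2}$. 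A triangle-inequality combination of the two estimates produces the claimed bound.

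The only real subtlety in this plan is the final shift correction: the evolution semigroup naturally anchors the $V$-arguments at the evaluation point $t$, whereas the corollary anchors them at the initial time $s$. The telescoping argument absorbing this index shift is routine once the $\BUC^1$-regularity of $V$ in time is available, and the desired rate $(t-s)^2/n$ appears naturally because there are $n$ factors each differing by $O(\tau^2)=O((t-s)^2/n^2)$ in operator norm.
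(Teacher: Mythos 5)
Your proposal is correct and follows essentially the same route as the paper: embed $u_0$ as the constant function $f\equiv u_0\in\BUC^1(\RR;\Sob^2(\RR^d))$, apply Theorem \ref{thm:SchStrangsgrp}, and evaluate at the point $t$ via the recovery formula \eqref{eq:recovery}. The paper's own proof is a one-sentence reference to these ingredients; your explicit telescoping argument correcting the shift between the right-endpoint sampling $V(s+p\tau)$, $p=1,\dots,n$, produced by $(\calS(\tau)\calT(\tau))^n$ and the left-endpoint sampling in the stated product is a genuine detail the paper leaves implicit, and you handle it correctly (the two multiplication operators commute, so the single-factor bound $C\tau^2$ and the resulting $C(t-s)^2/n$ total are justified).
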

\begin{proof}
The assertion follows from Theorem \ref{thm:SchStrangsgrp}, from the
calculations in the proof of Theorem \ref{prop:strangconv} and from
the fact that the constant function $f(s):=u_0\in \Sob^2(\RR^d)$ is
in the domain of $\calA$.
\end{proof}

\medskip In order to prove Theorem \ref{thm:SchStrangsgrp} we have to verify the commutator condition in Theorem \ref{Thm:Jahnke-Lubich} for the generators of the evolution semigroups. To  do this, we need the following abstract identification of the domains of fractional powers of evolution semigroup generators.

\medskip In what follows, let $\BB(\RR;Y)$, $\BUC^\alpha(\RR;Y)$ etc.~denote the space of bounded $Y$-valued functions, the space of $\alpha$-H\"older continuous $Y$-valued functions etc., where $Y$ is some Banach space. Let $X$ be a fixed Banach space, and let $\ee^{t A}$ be a (contractive) analytic semigroup with generator $(A,\dom(A))$ thereon. The fractional powers of $-A$ are denoted by $\big((-A)^{\alpha},\dom((-A)^\alpha)\big)$. Denote by $F_\alpha$ the abstract Favard spaces for $X$ and $(\ee^{tA})_{t\geq 0}$, i.e.,
\begin{equation*}
F_\alpha:=\Bigl\{x\in X:
\|x\|_\alpha:=\|x\|+\sup_{t>0}\bigl\|\tfrac{\ee^{tA}x-x}{t^\alpha}\bigr\|<+\infty\Bigr\},
\end{equation*}
which becomes a Banach space if endowed with the norm
$\|\cdot\|_\alpha$. For every $\alpha,\beta\in (0,1)$ with
$\alpha>\beta$ we have continuous embeddings (see Engel and Nagel
\cite[Sec.~II.5.]{Engel-Nagel}):
\begin{equation*}
F_\alpha\hookrightarrow \dom((-A)^\beta)\hookrightarrow F_\beta.
\end{equation*}

\medskip\noindent Consider now the Banach space $\calX:=\BUC(\RR;X)$ and the semigroup
\begin{equation*}
(T(t)f)(s):=\ee^{tA}f(s-t)
\end{equation*}
thereon. We are interested in the Favard spaces $\calX_\alpha$ of
this semigroup.

\begin{proposition}\label{prop:favi}
In the above setting we have the following continuous inclusions:
\begin{equation*}
\BUC(\RR;\dom((-A)^\alpha))\cap \calX_\alpha \hookrightarrow
\BUC(\RR;\dom((-A)^\beta))\cap \BUC^\beta(\RR;X),
\end {equation*}
{for all
$0<\beta \leq \alpha<1$, and }
\begin{equation*}
\BUC^\alpha(\RR;X)\cap
\BUC(\RR;\dom((-A)^\alpha)) \hookrightarrow
\BUC(\RR;\dom((-A)^\beta))\cap \calX_\beta,
\end{equation*}
for all $0<\beta \leq \alpha<1$.
\end{proposition}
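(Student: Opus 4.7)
The plan is to derive both inclusions from two dual algebraic decompositions of $\ee^{tA}f(s-t)-f(s)$, each controlled by the elementary scale of embeddings $\dom((-A)^\alpha)\hookrightarrow\dom((-A)^\beta)\hookrightarrow F_\beta$, valid for $0<\beta\le\alpha<1$ (the first inclusion is the usual monotonicity of fractional domains; the second is the right-hand link in the chain already cited from Engel--Nagel). In both parts the inclusion into $\BUC(\RR;\dom((-A)^\beta))$ is automatic from the embedding of domains alone, so the only real task is to establish the time-regularity statement or the Favard-type estimate, always uniformly in $s\in\RR$.

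For the first inclusion, I would take $f\in \BUC(\RR;\dom((-A)^\alpha))\cap \calX_\alpha$ and rearrange
\begin{equation*}
f(s-t)-f(s) \;=\; \bigl(\ee^{tA}f(s-t)-f(s)\bigr) \;-\; \bigl(\ee^{tA}f(s-t)-f(s-t)\bigr).
\end{equation*}
The first bracket is $O(t^\alpha)$ uniformly in $s$ by the very definition of $f\in\calX_\alpha$. The second bracket satisfies $\|\ee^{tA}f(s-t)-f(s-t)\|\le C\,t^\beta\, \|f(s-t)\|_{\dom((-A)^\alpha)}$ via the embedding $\dom((-A)^\alpha)\hookrightarrow F_\beta$, and since $f$ is bounded into $\dom((-A)^\alpha)$, this is $O(t^\beta)$ uniformly in $s$. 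As $t^\alpha\le t^\beta$ on $(0,1]$, the sum is $O(t^\beta)$ there, while plain boundedness handles $t\ge 1$; hence $f\in\BUC^\beta(\RR;X)$.

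For the second inclusion, I would take $f\in \BUC^\alpha(\RR;X)\cap \BUC(\RR;\dom((-A)^\alpha))$ and use the dual decomposition
\begin{equation*}
\ee^{tA}f(s-t)-f(s) \;=\; \ee^{tA}\bigl(f(s-t)-f(s)\bigr) \;+\; \bigl(\ee^{tA}f(s)-f(s)\bigr).
\end{equation*}
Uniform boundedness of $(\ee^{tA})_{t\ge 0}$ combined with the $\alpha$-H\"older seminorm of $f$ bounds the first summand by $O(t^\alpha)$, while $\dom((-A)^\alpha)\hookrightarrow F_\beta$ bounds the second by $O(t^\beta)$ uniformly in $s$. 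As before the sum is $O(t^\beta)$ on $(0,1]$, and boundedness handles $t\ge 1$, so $f\in\calX_\beta$.

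The heart of the argument is the decomposition, which is transparent once written down; the only non-algebraic input is the embedding $\dom((-A)^\alpha)\hookrightarrow F_\beta$ for $\alpha\ge\beta$, which uses analyticity of $\ee^{tA}$ and is precisely what the chain cited from Engel--Nagel delivers. If anything is mildly delicate it is simply the bookkeeping that every constant is independent of $s\in\RR$, but this is built into the hypotheses $\BUC(\RR;\dom((-A)^\alpha))$ and $\BUC^\alpha(\RR;X)$, which already package the needed uniformity.
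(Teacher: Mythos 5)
Your proof is correct and follows essentially the same route as the paper: both inclusions rest on exactly the two decompositions of $\ee^{tA}f(s-t)-f(s)$ that the paper uses, with the term $\ee^{tA}f(s-t)-f(s-t)$ (resp.\ $\ee^{tA}f(s)-f(s)$) controlled by the analyticity-based estimate $\|(\ee^{tA}-I)(-A)^{-\alpha}\|\leq Ct^{\alpha}$, i.e.\ the embedding $\dom((-A)^{\alpha})\hookrightarrow F_{\beta}$. The only cosmetic difference is that the paper proves the case $\beta=\alpha$ and lets the general case follow from the Favard chain, whereas you carry $\beta$ through directly.
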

\begin{proof} We show the statement for $\beta=\alpha$, the rest then immediately follows.
We start with the second inclusion. For $f\in \BUC(\RR;X)$ we can
write
\begin{align*}
\sup_{t>0}\Bigl\|\frac{{T}(t)f-f}{t^\alpha}\Bigr\|&=\sup_{t>0}\sup_{s\in\RR}\frac{\|\ee^{tA} f(s-t)-f(s)\|}{t^\alpha}\\
&=\sup_{t>0}\sup_{s\in\RR}\frac{\|\ee^{tA} f(s)-f(s)+\ee^{tA} (f(s-t)-f(s))\|}{t^\alpha}\\
& \leq \sup_{s\in \RR}\|f(s)\|_{F_\alpha}+\|f\|_{\BUC^\alpha}.
\end{align*}

This shows that if $f\in \BB(\RR;F_\alpha)\cap \BUC^\alpha(\RR;X)$,
then $f\in \calX_\alpha$, and the inclusion is continuous, i.e.
\begin{equation*}
\|f\|_{\calX_\alpha}\leq c
\Bigl(\|f\|_{\BB(\RR;F_\alpha)}+\|f\|_{\BUC^\alpha(\RR;X)}\Bigr).
\end{equation*}

\medskip\noindent To see the first inclusion we use now that $A$ generates an analytic semigroup. If $f\in \BUC(\RR;\dom((-A)^{\alpha}))$,
then
\begin{align*}
&\sup_{t>0}\frac{\|\ee^{tA}f(s-t)-f(s-t)\|}{t^\alpha}=\sup_{t>0}\frac{\|(\ee^{tA}-I)(-A)^{-\alpha}(-A)^\alpha f(s-t)\|}{t^\alpha}\\
&\quad\leq C\sup_{t\in \RR} \|(-A)^\alpha f(s-t)\|\leq
C\|f\|_{\BUC(\RR;\dom((-A)^{\alpha}))}.
\end{align*}

This implies then
\begin{align*}
\sup_{t>0}\frac{\|f(s-t)-f(s)\|}{t^\alpha}\leq \sup_{s\in
\RR}\sup_{t>0}\Bigl\|\frac{{T}(t)f-f}{t^\alpha}\Bigr\|+C\|f\|_{\BUC(\RR;\dom((-A)^{\alpha}))}.
\end{align*}
The proof is complete.
\end{proof}

Now we are in the position to check the required commutator
condition and thus to prove Theorem \ref{thm:SchStrangsgrp}.

\begin{proof}[Proof of Theorem \ref{thm:SchStrangsgrp}]
Consider now the evolution semigroup corresponding to the
non-autonomous equation \eqref{eq:schroedinger}. The corresponding
generator is given formally as
\begin{equation*}
-\tfrac{\dd}{\dd s}+\Delta+V(t).
\end{equation*}

Take now $f\in \BUC^1(\RR;\Sob^2(\RR^d))$, and notice that then $f$
belongs to the domain $\dom(\mathcal{A})$. We calculate the
commutator of $\calA$ and $\calB$. We have
\begin{align*}
[\mathcal{A},\mathcal{B}]f=-V'(t)f(t)+(\Delta V(t))f+2\nabla
V(t)\cdot \nabla f(t).
\end{align*}
Now, if we assume that $V\in \BUC^1(\RR;\Ell^{\infty}(\RR^d))$ and
$V\in \BUC(\RR;W^{2,\infty}(\RR^d))$, then the first two terms can
be estimated by $c\|f\|$, so we have only to deal with the term
$2\nabla V\cdot \nabla f$, for which it suffices to estimate
$\partial _i f(t)$ for $i=1,\dots, d$. We have
\begin{align*}
\|\partial_i f(t)\|_2\leq
c\|\Delta^{1/2}f(t)\|_2\quad\mbox{($\partial_i$ is
$\Delta^{1/2}$-bounded on $\Ell^2$).}
\end{align*}
By Proposition \ref{prop:favi} this completes the proof of the
commutator condition \eqref{eq:commutator_cond} in the form
\begin{equation*}
\|[\calA,\calB]f\|\leq \|(-\calA)^\alpha f\|\quad\mbox{for all $f\in
\dom(\calA)$ with some given $\alpha \geq1/2$}.
\end{equation*}
Hence Theorem \ref{Thm:Jahnke-Lubich} yields the assertion.
\end{proof}


\section{Numerical examples illustrating the convergence}\label{sec:8}

In Section \ref{sec:7} we already introduced the non-autonomous
parabolic equation (sometimes also called imaginary time
Schr\"odinger equation)
\begin{equation}
\partial_t u(x,t) = \Delta u(x,t) + V(x,t)u(x,t)
\nonumber
\end{equation}
in $\RR^d$ with appropriate initial conditions with $V$ being a
smooth and bounded function. In the following we will apply the
sequential splitting introduced in Section \ref{sec:5} to the
sub-operators
\begin{align*}
A(t):=\Delta \quad \mbox{and} \quad B(t):=\mbox{multiplication by
${V(x,t)}$}.
\end{align*}
In Theorem \ref{th:firstproduct} we showed that the product formula
describing the sequential splitting is convergent also in the case
if we are able to solve the corresponding autonomous Cauchy problems
(Eq.~1)-(Eq.~2) with operators $A(r)$ and $B(r)$ for every time level
$r\in\RR$. We will use this result when constructing our numerical
scheme.

In order to illustrate numerically the convergence of the sequential
splitting and give an estimate on its order, let us consider the
following non-autonomous equation with boundary and initial
conditions:
\begin{equation}
\begin{cases}
\partial_t u(x,t) = \partial^2_x u(x,t) + V(x,t)u(x,t),\quad t\ge 0,\ x\in[0,1], \\
u(0,t) = u(1,t) = 0, \quad t\ge 0, \\
u(x,0) = u_0(x), \quad x\in[0,1]
\end{cases}
\label{eq:example}
\end{equation}
with functions $V(x,t)$ and $u_0(x)$ given later on in the example.

\subsection{Error analysis}

Let $(u_\mathrm{spl})_i^n$ denote the approximation of the exact
solution $u(i\Dx,n\Dt)$ of problem \eqref{eq:example} at time $n\Dt$
and at the grid point $i\Dx$ (with $n=0,...,N-1$ and $i=0,...,I-1$)
using sequential splitting. At this point the time-step
$\Dt=\tfrac{1}{N-1}$ and the grid size $\Dx=\tfrac{1}{I-1}$ have
certain given values. We call
$(u_\mathrm{spl})^n=\big((u_\mathrm{spl})_0^n,(u_\mathrm{spl})_1^n,...,(u_\mathrm{spl})_{I-1}^n\big)$,
$n=0,1,..,N-1$, the \emph{split solution} of problem
\eqref{eq:example}. As already seen, the order of the splitting
procedure can be estimated with the help of the splitting error
defined by
\begin{equation*}
\mathcal{E}_\mathrm{spl}^n:=\| u^n - u_\mathrm{spl}^n \|
\end{equation*}
where $u^n=(u_0^n,u_1^n,...,u_{I-1}^n)$ with $u_i^n=u(i\Dx,n\Dt)$,
$i=0,1,...,I-1$. With this notation the splitting procedure (or an
arbitrary finite difference method) is of order $p>0$ if for
sufficiently smooth initial values there is a constant $C>0$ such
that for all $t\in [0,t_0]$ we have
\begin{align*}
\mathcal{E}_\mathrm{spl}^n &\leq \tfrac{C}{n^p}, \intertext{or, if
the method is stable, equivalently,} \mathcal{E}_\mathrm{spl}^1
&\leq C'\Dt^{p+1}.
\end{align*}
 In general, the exact solution of problem \eqref{eq:example} is unknown, therefore, the local splitting error $\mathcal{E}_\mathrm{spl}^1$ is to be estimated as well. To this end we compute a so-called \emph{reference solution} $u_\mathrm{ref}^n$ on a finer space grid using no splitting procedure. Then the order $p$ of the splitting procedure can be determined as follows. From the definition of $p$ we have $\mathcal{E}_\mathrm{spl}^1\le C\Dt^{p+1}$. Approximating $u^n$ with $u_\mathrm{ref}^n$, we obtain $\mathcal{E}_\mathrm{spl}^1\approx\widetilde{\mathcal{E}}_\mathrm{spl}^1:=\|u_\mathrm{ref}^1-u_\mathrm{spl}^1\|\le C\Dt^{p+1}$. Thus,
\begin{equation*}
\log\widetilde{\mathcal{E}}_\mathrm{spl}^1 \le (p+1)\log\Dt + \log
C.
\end{equation*}
Then we can estimate $p$ by computing the approximate local
splitting error $\widetilde{\mathcal{E}}_\mathrm{spl}^1$ for many
different values of the time-step $\Dt$, plotting the logarithm of
the results, and fitting a line of form $y(w)=aw+b$ to them. Hence,
$a\approx p+1$ and $b\approx\log C$. Note, however, that the split
solution contains not only the splitting error but also a certain
amount of error originating from the spatial and temporal
discretization. In what follows we show how to determine the
numerical solutions $u_\mathrm{ref}^1$ and $u_\mathrm{spl}^1$.

\medskip
We also note that it is reasonable to compute a relative local error
defined as
\begin{equation*}
\mathcal{E}_\mathrm{loc}=\frac{\widetilde{\mathcal{E}}_\mathrm{spl}^1}{\|u_\mathrm{ref}^1\|}
\end{equation*}
because this yields the ratio how the split solution differs from
the reference solution.

\subsection{Numerical scheme}

In order to solve numerically the problem \eqref{eq:example} we
should discretize it in both space and time. For the temporal
discretization we used the Crank-Nicholson method, and we chose the
finite difference method for the spatial discretization.

\subsubsection{Reference solution}

As mentioned above, we need a reference solution $u_\mathrm{ref}^n$
computed without using splitting procedures. After discretizing the
equation, we obtain the following numerical scheme for determining
$(u_\mathrm{ref})_i^{n+1}$:
\begin{equation}
(u_\mathrm{ref}^{n+1})_i \\
=\big(1-(H_\mathrm{ref})_i^{n+1}\big)^{-1}\big(1+(H_\mathrm{ref})_i^n\big)(u_\mathrm{ref}^{n})_i
\label{eq:scheme}
\end{equation}
with
\begin{equation*}
(H_\mathrm{ref})_i^n=\frac{\Dt}{2}\left(\frac{u_{i+1}^{n+1}-2u_i^{n+1}+u_{i-1}^{n+1}}{\Dx^2}+V_i^n\right),
\end{equation*}
where $V_i^n:=V(i\Dx,n\Dt)$.

\subsubsection{Split solution}

Application of sequential splitting means that instead of the whole
problem \eqref{eq:example} two sub-problems are solved. In our
examples the first sub-problem corresponds to the diffusion equation
$\partial_tu_A(x,t)=\partial_x^2u_A(x,t)$. Its numerical solution
$u_A^n$ can also be computed using Crank-Nicholson temporal and
finite difference spatial discretization methods. Then we obtain the
following numerical scheme similar to \eqref{eq:scheme}:
\begin{equation}
(u_A^{n+1})_i \\
=\big(1-(H_A)_i^{n+1}\big)^{-1}\big(1+(H_A)_i^n\big)(u_A^{n})_i
\label{eq:scheme_sq1}
\end{equation}
with
\begin{equation*}
(H_A)_i^n=\frac{\Dt}{2}\frac{u_{i+1}^{n+1}-2u_i^{n+1}+u_{i-1}^{n+1}}{\Dx^2}.
\end{equation*}
The second sub-problem has the multiplication operator by ${V(x,t)}$
on its right-hand side, i.e. $\partial_tu_B(x,t)=V(x,t)u_B(x,t)$. We
refer again to Theorem \ref{th:firstproduct} and take the function
$V$ only at time levels $t=n\Dt$, $n=0,1,...,N-1$. In this
(autonomous) case the exact solution
$u_B(x,t)=\ee^{t{V(x,n\Dt)}}u_0(x)$ is known. At the $n^\mathrm{th}$
time level and on the space grid it has the form
\begin{equation}
(u_B^n)_i=u_B(i\Dx,n\Dt)=\ee^{\Dt V(i\Dx,n\Dt)}u_0(i\Dx).
\label{eq:scheme_sq2}
\end{equation}
Due to the product formula \eqref{spl_sq_1}, the split solution
$u_\mathrm{spl}^n $ is given by the following algorithm:
\begin{align*}
& \mathbf{for} \quad i=0,...,I-1 \\
& \qquad \mbox{initial function:} \quad (u_A)_i^0:=u_0(i\Dx) \\
& \mathbf{end} \\
& \mathbf{for} \quad n=0,1,...,N-1 \\
& \qquad \mathbf{for} \quad i=0,1,...,I-1 \\
& \qquad \qquad \mbox{solve the first sub-problem using \eqref{eq:scheme_sq1}} \quad \Longrightarrow \quad (u_A)_i^n \\
& \qquad \mathbf{end} \\
& \qquad \mathbf{for} \quad i=0,1,...,I-1 \\
& \qquad \qquad \mbox{solve the second sub-problem using \eqref{eq:scheme_sq2}} \quad \Longrightarrow \quad (u_B)_i^n \\
& \qquad \mathbf{end} \\
& \mathbf{end} \\
& \mbox{split solution:} \quad u_\mathrm{spl}^{N-1}:=u_B^{N-1}
\end{align*}

\subsection{Numerical results}
Now we present some numerical results on the following example.



Choose
\begin{equation*}
V(x,t)=t-500x^2 \quad \mbox{and} \quad u_0(x)=\ee^{-50(x-0.4)^2}.
\end{equation*}
Since the exact solution is unknown in this case, we should estimate the local splitting error using the reference solution instead of the exact one. Then the relative local splitting error $\mathcal{E}_\mathrm{loc}$ and its order $p$ can be measured.
\begin{figure}[!ht]
\begin{center}
\includegraphics[width=8cm]{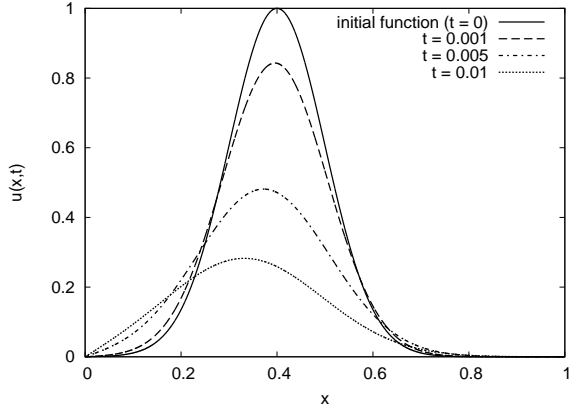}
\begin{parbox}{12cm}
{\caption{\label{fig:pupok}\footnotesize Numerical solution of
equation \eqref{eq:example} at time levels $t=0$, $t=10^{-3}$,
$t=5\cdot10^{-3}$, and $t=10^{-2}$, respectively.}}
\end{parbox}
\end{center}
\end{figure}
\begin{figure}[!ht]
\begin{center}
\includegraphics[width=8cm]{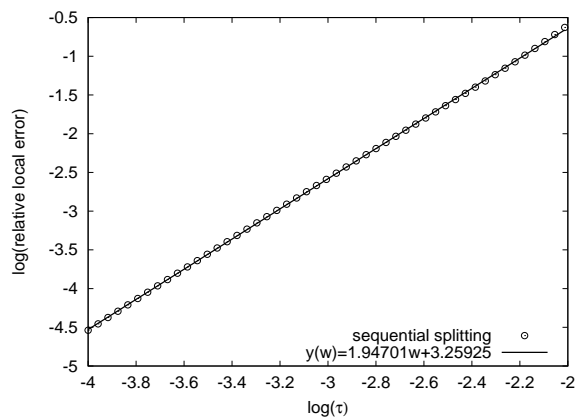}
\begin{parbox}{12cm}
{\caption{\label{fig:egyenes}\footnotesize Results obtained by
applying the sequential splitting with various time steps (dots),
and the line $y(w)=aw+b$ fitted to them with parameters
$a=1.9470\approx p+1$ and $b=3.25925$.}}
\end{parbox}
\end{center}
\end{figure}

On Figure \ref{fig:pupok} the time-behavior of the reference
solution can be seen at the four time levels $t=0$, $t=10^{-3}$,
$t=5\cdot10^{-3}$, and $t=10^{-2}$, respectively. The effect of the
diffusion can be clearly observed. Figure \ref{fig:egyenes} shows
the result of the fitting. The dots correspond to
$\log(\mathcal{E}_\mathrm{loc})$ for the various step sizes. The
line fitted to these points has the form $y(\log(\Dt))=a\log(\Dt)+b$
with $a=1.9470$ and $b=3.25925$. As mentioned above, the order of
the splitting procedure $p$ can be estimated by $a-1\approx 1$, that
is, the sequential splitting is of first order.
%


\section*{Acknowledgments}
A.~B\'atkai was supported by the Alexander von Humboldt-Stiftung. We
thank Wolfgang Arendt (Ulm), Roland Schnaubelt (Karlsruhe) and Alexander Ostermann (Innsbruck) for interesting and useful
discussions. The European Union and the European Social Fund have provided financial
support to the project under the grant agreement no. T\'{A}MOP-4.2.1/B-09/1/KMR-2010-0003.


\begin{thebibliography}{10}

\bibitem{Arendt_etal} W. Arendt, A.  Grabosch, G.  Greiner, U.  Groh, H. P.  Lotz, U.  Moustakas, R.  Nagel, F.  Neubrander,  U.  Schlotterbeck,  \emph{One-parameter semigroups of positive operators},
Lecture Notes in Mathematics \textbf{1184}, Springer-Verlag, Berlin,  1986.

\bibitem{Batkai-Csomos-Nickel}
A.~B\'{a}tkai, P.~Csom\'{o}s, and G.~Nickel, \emph{Operators and
  spatial approximations for evolution equations}, J. Evol. Equ. \textbf{9}
  (2009), 613--636.

\bibitem{Batkai-Piazzera}
A.~B\'{a}tkai and S.~Piazzera, \emph{Semigroups for delay equations},
Research
  Notes in Mathematics, vol.~10, A K Peters Ltd., Wellesley, MA, 2005.

\bibitem{Bjorhus}
M.~Bj{\o}rhus, \emph{Operator splitting for abstract {C}auchy
problems}, IMA J.
  Numer. Anal. \textbf{18} (1998),  419--443.

\bibitem{Cachia-Zagrebnov} V. Cachia, V. A. Zagrebnov,   \emph{Operator-norm approximation of semigroups by quasi-sectorial
 contractions}, J. Funct. Anal.  \textbf{180}  (2001), 176--194.

\bibitem{Chernoff}
P.~R. Chernoff, \emph{Product formulas, nonlinear semigroups, and
addition of
  unbounded operators}, American Mathematical Society, Providence, R. I., 1974,
  Memoirs of the American Mathematical Society, No. 140.

\bibitem{Csomos-Nickel}
P.~Csom\'{o}s and G.~Nickel, \emph{Operator splitting for delay
equations},
  Comput. Math. Appl. \textbf{55} (2008), 2234--2246.

\bibitem{Engel-Nagel}
K.-J. Engel and R.~Nagel, \emph{One-parameter semigroups for linear
evolution
  equations}, Graduate Texts in Mathematics, vol. 194, Springer-Verlag, New
  York, 2000, With contributions by S. Brendle, M. Campiti, T. Hahn, G.
  Metafune, G. Nickel, D. Pallara, C. Perazzoli, A. Rhandi, S. Romanelli and R.
  Schnaubelt.

\bibitem{Evans} {D.E. Evans},  \emph{Time dependent perturbations and
  scattering of strongly continuous groups on Banach space}, Math. Ann. \textbf{221} (1976), 275--290.

\bibitem{Farago-Havasi}
I.~Farag\'{o} and \'{A}.~Havasi, \emph{Consistency analysis of operator
  splitting methods for {$C_0$}-semigroups}, Semigroup Forum \textbf{74}
  (2007), 125--139.

\bibitem{Farago-Havasi_book}
I.~Farag\'{o} and \'{A}.~Havasi, \emph{Operator splittings and their
  applications}, Mathematics Research Developments, Nova Science Publishers,
  New York, 2009.

\bibitem{Graser}
T.~Graser, \emph{Operator multipliers generating strongly continuous
  semigroups}, Semigroup Forum \textbf{55} (1997), 68--79.

\bibitem{Hansen-Ostermann}
E.~Hansen and A.~Ostermann, \emph{Exponential splitting for
unbounded
  operators}, Math. Comp. \textbf{78} (2009), 1485--1496.

\bibitem{Hansen-Ostermann2}
E.~Hansen and A.~Ostermann, \emph{Dimension splitting for time dependent operators},  in:  X. Hou, et al. (Eds.), Dynamical Systems and Differential Equations. Proceedings of the 7th AIMS International Conference (Arlington,
Texas, USA), DCDS Supplement 2009, American Institute of Mathematical
Sciences, Springfield MO, 2009, pp. 322–-332.

\bibitem{Holden-Karlsen-Lie-Risebro} H. Holden, K. H. Karlsen, K.-A. Lie, N. H. Risebro, \emph{Splitting Methods for Partial Differential Equations with Rough Solutions}, European Mathematical Society, 2010.

\bibitem{Howland}  {J.S. Howland},   \emph{Stationary scattering theory for
  time-dependent Hamiltonians}, Math. Ann. \textbf{207} (1974), 315--335.

\bibitem{Ichinose-Neidhardt-Zagrebnov}
T. Ichinose, H. Neidhardt, V. A. Zagrebnov,  \emph{Trotter-Kato product formula and fractional powers of self-adjoint
 generators}, J. Funct. Anal.  \textbf{207}  (2004), 33--57.

\bibitem{Jahnke-Lubich}
T.~Jahnke and C.~Lubich, \emph{Error bounds for exponential operator
  splittings}, BIT \textbf{40} (2000), no.~4, 735--744.

\bibitem{Kato70}
T.~Kato, \emph{Linear evolution equations of ``hyperbolic'' type},
J. Fac. Sci.
  Univ. Tokyo Sect. I \textbf{17} (1970), 241--258.

\bibitem{Kellermann}
H.~Kellermann, \emph{Linear evolution equations with time-dependent
domain},
  Semesterberichte Funktionalanalysis, T\"ubingen, WS 1985.

\bibitem{Kovacs} M. Kov\'{a}cs, \emph{On positivity, shape, and norm-bound preservation
of time-stepping methods for semigroups}, J. Math. Anal. Appl. \textbf{304} (2005),  115–-136.

\bibitem{Nagel-Nickel}
R.~Nagel and G.~Nickel, \emph{Well-posedness for nonautonomous
abstract
  {C}auchy problems}, Evolution equations, semigroups and functional analysis
  ({M}ilano, 2000), Progr. Nonlinear Differential Equations Appl., vol.~50,
  Birkh\"auser, Basel, 2002, pp.~279--293.

\bibitem{NaNiRo}
R.~Nagel, G.~Nickel, and S.~Romanelli, \emph{Identification of
extrapolation
  spaces for unbounded operators}, Quaestiones Math. \textbf{19} (1996), 83--100.

\bibitem{Neidhardt1} {H. Neidhardt},  \emph{On abstract linear evolution
 equations, I}, Math. Nachr. \textbf{103} (1981), 283--298.

\bibitem{Neidhardt2} {H. Neidhardt},  \emph{On abstract linear evolution
 equations, II},  Prepr., Akad. Wiss. DDR, Inst. Math. P-MATH-07/81, Berlin, 1981.

\bibitem{Neidhardt3} {H. Neidhardt},  \emph{On abstract linear evolution
 equations, III},  Prepr., Akad. Wiss. DDR, Inst. Math.  P-MATH-05/82, Berlin, 1982.


\bibitem{Neidhardt-Zagrebnov1} H.~Neidhardt and V.~A. Zagrebnov, \emph{Trotter-Kato product formula and symmetrically normed ideals},
 J. Funct. Anal.  \textbf{167}  (1999),  113--147.

\bibitem{Neidhardt-Zagrebnov}
H.~Neidhardt and V.~A. Zagrebnov, \emph{Linear non-autonomous
{C}auchy problems
  and evolution semigroups}, Adv. Differential Equations \textbf{14} (2009), 289--340.

\bibitem{nickel97}
G.~Nickel, \emph{Evolution semigroups for nonautonomous {C}auchy
problems},
  Abstr. Appl. Anal. \textbf{2} (1997), 73--95.

\bibitem{nickel00}
G.~Nickel, \emph{Evolution semigroups and product formulas for
nonautonomous
  {C}auchy problems}, Math. Nachr. \textbf{212} (2000), 101--116.

\bibitem{Nickel-Schnaubelt}
G.~Nickel and R.~Schnaubelt, \emph{An extension of {K}ato's
stability condition
  for nonautonomous {C}auchy problems}, Taiwanese J. Math. \textbf{2} (1998), 483--496.

\bibitem{Pazy}
A.~Pazy, \emph{Semigroups of linear operators and applications to
partial
  differential equations}, Applied Mathematical Sciences, vol.~44,
  Springer-Verlag, New York, 1983.

\bibitem{Schnaubelt99}
R.~Schnaubelt, \emph{Sufficient conditions for exponential stability
and
  dichotomy of evolution equations}, Forum Math. \textbf{11} (1999),  543--566.

\bibitem{schnaubelt02}
R.~Schnaubelt, \emph{Well-posedness and asymptotic behaviour of
non-autonomous
  linear evolution equations}, Evolution equations, semigroups and functional
  analysis ({M}ilano, 2000), Progr. Nonlinear Differential Equations Appl.,
  vol.~50, Birkh\"auser, Basel, 2002, pp.~311--338.


\bibitem{Vuillermot2010}
P.-A. Vuillermot,  \emph{A  generalization of Chernoff's product
formula for time-dependent operators}, J. Funct. Anal. \textbf{259}
(2010),  2923--2938.


\bibitem{Vuillermot_etal}
P.-A. Vuillermot, W.~F. Wreszinski, and V.~A. Zagrebnov, \emph{A
  {T}rotter-{K}ato product formula for a class of non-autonomous evolution
  equations}, Nonlinear Anal. \textbf{69} (2008), 1067--1072.

\bibitem{VWZ2009}
P.-A. Vuillermot, W.~F. Wreszinski, and V.~A. Zagrebnov, \emph{A
general
  {T}rotter-{K}ato formula for a class of evolution operators}, J. Funct. Anal.
  \textbf{257} (2009), 2246--2290.


\end{thebibliography}

\parindent0pt

\end{document}